\newtheorem{theorem}{Theorem}[section]
\newtheorem{lemma}[theorem]{Lemma}
\newtheorem{corollary}[theorem]{Corollary}
\newtheorem{question}[theorem]{Question}
\newcommand{\R}{{\mathbb R}}
\newcommand{\be}{\begin{equation}}
\newcommand{\ee}{\end{equation}}
\newcommand{\bea}{\begin{eqnarray}}
\newcommand{\eea}{\end{eqnarray}}
\newcommand{\vs}{\vspace{0.5cm}}
\newcommand{\vsv}{\vspace{0.12cm}}
\def\XXint#1#2#3{{\setbox0=\hbox{$#1{#2#3}{\int}$ }
\vcenter{\hbox{$#2#3$ }}\kern-.6\wd0}}
\begin{document}

\title{Lie Groups with flat Gauduchon connections}

\author{Luigi Vezzoni}

\address{Luigi Vezzoni. Dipartimento di Matematica G. Peano, Universit\.{a} Di Torino, Via Carlo Alberto 10, 10123 Torino, Italy.}
\email{{luigi.vezzoni@unito.it}}

\author{Bo Yang}\thanks{The research of BY is partially supported by a grant from Xiamen University (0020-ZK1088).}

\address{Bo Yang. School of Mathematical Sciences, Xiamen University, Xiamen, Fujian, 361005, China.}
\email{{boyang@xmu.edu.cn}}

\author{Fangyang Zheng}\thanks{The research of FZ is partially supported by a Simons Collaboration Grant 355557.}

\address{Fangyang Zheng.  Department of Mathematics,
The Ohio State University, 231 West 18th Avenue, Columbus, OH 43210, USA}


\email{{zheng.31@osu.edu}}

\vsv

\subjclass[2010]{53C55 (Primary)}

\keywords{Hermitian manifolds, Lie groups, left-invariant metrics.}

\vsv

\begin{abstract}
We pursuit the research line proposed in \cite{YZ-Gflat} about the
classification of Hermitian manifolds whose $s$-Gauduchon connection
$\nabla^s =(1-\frac{s}{2})\nabla^c + \frac{s}{2}\nabla^b$ is flat,
where $s\in \R$ and  $\nabla^c$ and $\nabla^b$ are the Chern and the
Bismut connections, respectively. We focus on Lie groups equipped
with a left invariant Hermitian structure. Such spaces provide an
important class of Hermitian manifolds in various contexts and are
often a valuable vehicle for testing new phenomena in complex and
Hermitian geometry. More precisely, we consider a connected
$2n$-dimensional Lie group $G$ equipped with a left-invariant
complex structure $J$ and a left-invariant compatible metric $g$ and
we assume that its connection $\nabla^s$ is flat. Our main result
states that if either $n$=2 or there exits a $\nabla^s$-parallel
left invariant frame on $G$, then $g$ must be K\"ahler.
This result demonstrates rigidity properties of some complete
Hermitian manifolds with  $\nabla^s$-flat Hermitian metrics.
\end{abstract}

\maketitle

\tableofcontents

\markleft{Gauduchon flat Lie Groups}

\markright{Gauduchon flat Lie Groups}

\section{Introduction}

An affine connection $\nabla$ on a Hermitian manifold $(M, J, g)$ is
called {\em Hermitian} if preserves the Hermitian structure, i.e.
$\nabla J=\nabla g=0$. Every Hermitian manifold admits Hermitian
connections and canonical choices of such connections can be done by
imposing some constraints on the torsion. For instance the {\em
Chern connection} $\nabla^c$ (see e.g. Proposition 10.2 on p.180 in
\cite{KN}) and the {\em Bismut connection} $\nabla^b$ (see
\cite{Bismut} and \cite{Yano}) are canonical Hermitian connections.
A precise definition of {\em canonical Hermitian connection} was
given by Gauduchon in \cite{Gauduchon2} who also proved that the
space of canonical Hermitian connections can be parametrized  as
\[
\{\nabla^s=(1-\tfrac{s}{2})\nabla^c + \tfrac{s}{2}\nabla^b\,\,:\,\,
s\in \mathbb{R}\}.\] In the following given a real number $s$, we
call $\nabla^s$ as the {\em $s$-Gauduchon connection}. Note that
$\nabla^0=\nabla^c$ and $\nabla^2=\nabla^b$. Moreover, $\nabla^1$
corresponds to the projection of the Levi-Civita connection onto the
space of $(1,0)$-vector fields. This connection is called the {\em
first canonical Hermitian connection} in \cite{Gauduchon2} and
\cite{L1957}, the {\em associated connection} in \cite{GK} and the
{\em complexified Levi-Civita connection} in \cite{LiuYang}. In
\cite{YZ-Gflat}, it is proposed the study of compact Hermitian
manifolds with flat $s$-Gauduchon connections, as a special case of
a problem posed by Yau (Problem 87 in \cite{Yau}). The case $s=0$
(i.e. when the Chern connection is flat) was studied in the 1950s by
W. Boothby \cite{Boothby}  and H. C. Wang \cite{Wang}. Boothby
proved that if $(M^n,g)$ is a compact Chern-flat Hermitian manifold,
then its universal cover $\widetilde{M}$ has a natural structure of
complex Lie group and $g$ lifts to a left invariant metric on
$\widetilde{M}$. In this result the compactness assumption cannot be
removed in general, since Boothby constructed local examples of
Chern-flat Hermitian surfaces with non-parallel torsion.
In Section 5 of \cite{WYZ}, it was given a complete noncompact example of such kind.

For $s=2$, Bismut-flat Hermitian manifolds were classified in \cite{WYZ} (even in the non-compact case). From \cite{WYZ} it follows that the universal cover of a Bismut-flat Hermitian manifold is an open subset of a Samelson space, namely, a simply-connected, connected Lie group $G$ equipped with a bi-invariant metric and a compatible left invariant complex structure. Note that any Samelson space is the product of a compact semi-simple Lie group with a vector group by Milnor's lemma in \cite{Milnor}.

Also $\nabla^1$-flat manifolds have been studied in several papers, and,
even though there is no exhaustive classification of such manifolds, there
are some partial classification results
(see \cite{Andrada2012} and \cite{GK}).

In  \cite{YZ-Gflat} it was conjectured that a compact $\nabla^s$-flat Hermitian manifold, for $s\neq 0, 2\,$, must be K\"ahler, and thus a finite undercover of a flat complex torus. The conjecture was confirmed in \cite{YZ-Gflat} for Hermitian surfaces, as well as in general dimensions under the additional assumption that $s\notin [a_n^-,a_n^+]$. Here $a_n^{\pm}$ are explicit constants with $a_n^- > 0.53$ and $a_n^+ < 7.47$.

In this paper we focus on connected Lie groups equipped with a left invariant Hermitian structure. Such manifolds (and their quotients) costituite a rich and interesting class of Hermitian manifolds. Classical examples are provided by even dimensional compact Lie groups, since every even-dimensional compact Lie group admits a left invariant complex structure compatible with its bi-invariant metric (see \cite{Samelson} and \cite{WangAJM}). Moreover, left invariant Hermitian structures exist on some nilpotent and solvable Lie groups, see e.g \cite{Snow}, \cite{CFGU}, \cite{Salamon2001}, \cite{Salamon2002} and the reference therein.


The main purpose of this paper is to investigate the following question:

\begin{question} \label{main question}
Let $G$ be a connected Lie group equipped with a left invariant metric $g$ and a compatible left invariant complex structure $J$. Assume that the connection $\nabla^s$ of $(G,g)$ is flat for some $s\neq 0,2$. Then must $g$ be K\"ahler?
\end{question}

From Theorem 4.14 in \cite{Andrada2012} it follows that  Question \ref{main question} has an affirmative answer when $s=1$ and the complex structure is abelian, i.e. when the space of left invariant $(1,0)$-vector fields on $G$ is an abelian Lie algebra.

The first main results of the present paper is that Question \ref{main question} has an affirmative answer in real dimension 4:


\begin{theorem}\label{intro1}
Let $G$ be a connected Lie group of real dimension $4$ equipped with a left invariant metric $g$ and a compatible left invariant complex structure $J$. Assume that the Hermitian manifold $(G,g)$ has a flat $s$-Gauduchon connection $\nabla^s$ for some $s\neq 0, 2$. Then $g$ is K\"ahler.
\end{theorem}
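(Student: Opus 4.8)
The plan is to descend to the Lie algebra and turn flatness into a polynomial system in the structure constants. Since $g$, $J$ and all three canonical connections are left invariant, it suffices to work on the Lie algebra $\mathfrak{g}$ of $G$. I would fix a unitary frame $e_1,e_2$ of the $(1,0)$-space $\mathfrak{g}^{1,0}$ and record the (constant) structure constants
\[
[e_i,e_j]=\sum_k C^k_{ij}\,e_k,\qquad [e_i,\bar e_j]=\sum_k\big(E^k_{ij}\,e_k+F^k_{ij}\,\bar e_k\big),
\]
where integrability of $J$ forces the bracket of two $(1,0)$-vectors to remain in $\mathfrak{g}^{1,0}$, and reality imposes the usual conjugation relations between $E$ and $F$. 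Every curvature and torsion quantity is then a polynomial in these constants. The Chern torsion $\tau$, whose vanishing is exactly the K\"ahler condition $\partial\omega=0$, has for $n=2$ only two independent complex components, each an explicit algebraic combination of the $C^k_{ij}$, $E^k_{ij}$, $F^k_{ij}$ (note that $\tau$ genuinely involves the $(1,1)$-constants, as the Kodaira--Thurston type examples with $C\equiv 0$ already show). The goal becomes to prove that these two components vanish.

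Next I would encode flatness through the contorsion. Writing $\nabla^s=\nabla^c+\tfrac{s}{2}\,\Upsilon$, where $\Upsilon=\nabla^b-\nabla^c$ is the endomorphism-valued $1$-form built algebraically from the torsion, the curvature-difference identity for a connection on $T^{1,0}G$ differing from $\nabla^c$ by a tensor gives
\[
R^s=R^c+\tfrac{s}{2}\,d^{\nabla^c}\Upsilon+\tfrac{s^2}{4}\,\Upsilon\wedge\Upsilon .
\]
The key structural input is that the Chern curvature $R^c$ is of pure type $(1,1)$. Imposing $R^s=0$ and separating by bidegree therefore splits into a $(1,1)$-equation, $R^c+\tfrac{s}{2}(d^{\nabla^c}\Upsilon)^{1,1}+\tfrac{s^2}{4}(\Upsilon\wedge\Upsilon)^{1,1}=0$, and two \emph{torsion-only} equations from the $(2,0)$- and $(0,2)$-parts; dividing the latter by $s\neq0$ yields
\[
(d^{\nabla^c}\Upsilon)^{2,0}+\tfrac{s}{2}(\Upsilon\wedge\Upsilon)^{2,0}=0,\qquad (d^{\nabla^c}\Upsilon)^{0,2}+\tfrac{s}{2}(\Upsilon\wedge\Upsilon)^{0,2}=0 .
\]
These relate the first covariant derivative of the torsion to a quadratic expression in the torsion, with $s$ appearing explicitly as a coefficient.

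Now I would exploit left-invariance decisively. On a Lie group every covariant derivative of a left-invariant tensor is again algebraic in the structure constants and the (constant) connection coefficients, so $d^{\nabla^c}\Upsilon$ and $\Upsilon\wedge\Upsilon$ contain no genuine differentiation: the displayed relations, the $(1,1)$-equation, and the first Bianchi identity for $\nabla^s$ all become a finite polynomial system in $C^k_{ij},E^k_{ij},F^k_{ij}$ with $s$ as a parameter. For $n=2$ this system is small enough to solve completely. The mechanism I expect is that, after using $s\neq0$ to reach the torsion-only equations, their combination with the $(1,1)$-equation and the Jacobi identity produces a relation on the torsion carrying an overall factor proportional to $s-2$; since $s\neq2$ this factor is invertible and the quadratic torsion terms are driven to zero, after which the $(1,1)$-equation closes the argument and forces $\tau=0$, i.e.\ $g$ is K\"ahler. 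This is precisely where the Chern-flat complex Lie groups ($s=0$) and the Bismut-flat Samelson spaces ($s=2$) are excluded, consistently with the non-K\"ahler examples available in those two cases.

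The main obstacle is the last step: carrying out the elimination and verifying that no non-K\"ahler branch survives for any $s\neq0,2$. Unlike the general compact case of \cite{YZ-Gflat}, where one only controls $s$ outside the interval $[a_n^-,a_n^+]$, here left-invariance removes all analytic (integration-by-parts) input and reduces the problem to commutative algebra, while $n=2$ keeps both the number of unknowns and the degrees manageable. I would organize the elimination by first using the $(2,0)$/$(0,2)$ equations to eliminate the derivatives of the torsion, then substituting into the $(1,1)$-equation and the Jacobi relations of $\mathfrak{g}$, and finally ruling out the remaining branches via the explicit $s$-dependent factors; a short case distinction (for instance according to the unimodularity of $\mathfrak{g}$, controlled by the traces of the adjoint representation) is likely to be the only place where genuine casework is required.
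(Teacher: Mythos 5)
Your setup is sound, and in fact it is essentially a repackaging of what the paper does: splitting the flatness equation $R^s=0$ by bidegree (the $(2,0)$-part is exactly the source of the torsion-derivative identities in the paper's Lemma 3.1), observing that left invariance turns all covariant derivatives into polynomials in the structure constants and connection coefficients, and adjoining the Jacobi identities. The problem is that your proposal stops precisely where the proof begins: the elimination itself is deferred as ``the main obstacle,'' and the mechanism you predict for it --- a relation carrying an overall factor proportional to $s-2$, which for $s\neq 2$ drives the quadratic torsion terms to zero --- is not what actually happens, so the plan as stated would not close.

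Concretely, in complex dimension $2$ one may normalize a left invariant unitary frame so that $T^2_{12}=0$ and $T^1_{12}=\lambda\neq 0$ (assuming non-K\"ahler). Because the torsion components are constants, its $\nabla^s$-derivatives are algebraic in the connection coefficients $\Gamma$, e.g. $T^1_{12,\ell}=-\lambda\Gamma^2_{2\ell}$, and comparing with the flatness identities forces, among other things, the compatibility
\begin{equation*}
(s-2)\lambda \;=\; \frac{s^2(s-2)}{4(s-1)(2s-1)}\,\lambda ,
\end{equation*}
which (using $s\neq 2$, $\lambda\neq 0$) gives $7s^2-12s+4=0$, i.e. $s=\tfrac{2}{7}(3\pm\sqrt{2})$. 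Both roots lie strictly between $0$ and $2$, so the curvature equations together with left invariance do \emph{not} exclude a non-K\"ahler branch for all $s\neq 0,2$: a potential solution survives at exactly these two sporadic values, and no factor of $s-2$ appears to kill it. The paper then needs a second, independent input --- the Jacobi identity for $\{e_1,e_2,\overline{e}_1\}$, i.e. identity (\ref{eq:thirdJacobi}) with $i=\ell=1$, $j=k=2$ --- which, after expressing $C$ and $D$ through $T$ and $\Gamma$, yields $(5s-6)\lambda(5s-4)\overline{\lambda}=(5s-4)\lambda(s-2)\overline{\lambda}$ and hence $s=1$, contradicting $s=\tfrac{2}{7}(3\pm\sqrt{2})$. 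This two-stage structure (flatness pins $s$ to sporadic values; Jacobi then rules them out) is the actual content of the theorem, and it is exactly the part your proposal leaves to an unexecuted ``commutative algebra'' step with an incorrect guess about how it resolves.
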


For general dimensions we give an affirmative answer to Question \ref{main question} under the extra assumption on $(G,g)$ to admit a left-invariant $\nabla^s$-parallel frame.

\begin{theorem}\label{intro2}
Let $G$ be a connected Lie group of real dimension $2n$ equipped with a left invariant metric $g$ and a compatible left invariant complex structure $J$. Assume that $(G,  g)$ has a flat $s$-Gauduchon connection $\nabla^s$ for some $s\neq 0, 2$. If $G$ admits a left invariant $\nabla^s$-parallel tangent frame, then $g$ is K\"ahler. Moreover, $G$ is abelian.
\end{theorem}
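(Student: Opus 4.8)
The plan is to work entirely at the level of the Lie algebra $\mathfrak{g}$ of $G$ with its left-invariant Hermitian structure, since all the relevant tensors (the metric $g$, the complex structure $J$, the connections $\nabla^c$, $\nabla^b$, hence $\nabla^s$, together with their curvature and torsion) are left-invariant and therefore determined by constant coefficients in a left-invariant frame. First I would fix a left-invariant $\nabla^s$-parallel frame $\{e_1,\dots,e_{2n}\}$ whose existence is the hypothesis. Relative to this frame the connection forms of $\nabla^s$ vanish identically, so for any left-invariant vector fields $X,Y$ we have $\nabla^s_X e_i = 0$, and consequently the torsion of $\nabla^s$ is computed purely from Lie brackets: $T^s(X,Y) = -[X,Y]$ when both arguments are parallel frame vectors. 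This is the crucial simplification — flatness plus a parallel frame forces the full structure of the connection to be encoded in the bracket, reducing a differential-geometric problem to a problem of linear algebra on the structure constants.

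Next I would exploit the formula $\nabla^s = (1-\tfrac{s}{2})\nabla^c + \tfrac{s}{2}\nabla^b$ to relate $T^s$ to the torsions of the Chern and Bismut connections. Writing $T^c$ and $T^b$ for the Chern and Bismut torsions, the torsion of the averaged connection decomposes accordingly, and the key structural facts are that $T^c$ has vanishing $(1,1)$-part while the Bismut torsion is totally skew-symmetric (equivalently, $\nabla^b$ has torsion equal to $-J d\omega$, where $\omega$ is the Kähler form). Combining these, the vanishing of the connection forms of $\nabla^s$ should pin down the $(2,0)$-, $(1,1)$-, and $(0,2)$-components of the bracket separately. The plan is to extract from the parallel-frame condition a system of algebraic identities on the structure constants $c_{ij}^k$, and to show that the only solution compatible with $J$ being integrable (the Nijenhuis tensor vanishing) and $g$ being $J$-Hermitian is the trivial one in which the relevant torsion components vanish.

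Then I would use the vanishing of these torsion components to conclude first that $g$ is Kähler and second that $\mathfrak{g}$ is abelian. For the Kähler conclusion, note that the $s$-Gauduchon connection coincides with the Levi-Civita connection precisely when $g$ is Kähler; more directly, once the torsion identities force $d\omega = 0$ (which should follow from the skew part controlled by the Bismut piece) one has the Kähler condition immediately. For the abelian conclusion, I would argue that a flat Hermitian connection that is also the Levi-Civita connection (now that $g$ is Kähler) has vanishing curvature, and a flat left-invariant metric on a Lie group forces, via Milnor's analysis of flat left-invariant metrics, the Lie algebra to be abelian; alternatively the parallel-frame and vanishing-torsion conditions together give $[e_i,e_j]=0$ directly from $T^s(e_i,e_j)=-[e_i,e_j]$ once $T^s$ is shown to vanish on the full frame.

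The main obstacle I anticipate is the middle step: disentangling the $(1,1)$-component of the bracket. The $(2,0)$- and $(0,2)$-parts are governed by the Chern torsion and the Nijenhuis tensor and should be comparatively clean, but the $(1,1)$-bracket contributes to both $T^c$ and $T^b$ in a coupled way, and the coefficient $(1-\tfrac{s}{2})$ versus $\tfrac{s}{2}$ means the algebraic system depends on $s$. The hypothesis $s \neq 0,2$ is exactly what prevents the two connections from degenerating into one another and is presumably what makes the linear system on the structure constants nondegenerate; verifying that nondegeneracy — i.e. that no nonzero bracket can be simultaneously annihilated by the $\nabla^s$-parallel condition for $s$ outside $\{0,2\}$ — is where the real work lies, and I expect it to require carefully separating the Hermitian and skew-Hermitian parts of the torsion endomorphisms and tracking how the integrability of $J$ constrains them.
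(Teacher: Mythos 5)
Your setup is sound and coincides with the paper's first step: a left-invariant $\nabla^s$-parallel frame makes all connection coefficients $\Gamma^j_{ik}$ vanish, so the structure constants become proportional to the Chern torsion (in the paper's notation, $D=-sT$ and $C=2(s-1)T$), the whole problem reduces to algebra on structure constants, and your closing observation (torsion zero plus parallel frame forces all brackets to vanish, hence $G$ abelian) is fine. But the core of the proof --- actually showing that the torsion vanishes --- is absent from your proposal, and your expectation of how it should go mischaracterizes the problem. The constraints that kill $T$ are not a \emph{linear} system whose nondegeneracy one checks against $s\notin\{0,2\}$: they are \emph{quadratic} identities in $T$ (the Jacobi identities of $\mathfrak{g}$, equivalently the flatness identities of Lemma 3.1, which in a parallel frame lose all covariant-derivative terms). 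Integrability of $J$ and compatibility with $g$ are already encoded in the index structure of $C$, $D$, $T$ and yield nothing further; without invoking the quadratic identities, nonzero torsion tensors satisfying every pointwise linear condition you list exist in abundance.

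Moreover, resolving the quadratic system requires a case split your plan does not anticipate. For $s\neq 1$ and $n\geq 3$, the paper uses the identity $\sum_r T^{\ell}_{ir}T^r_{jk}=0$, which --- since $C=2(s-1)T$ with $s\neq 1$ --- says $\mathfrak{g}^{1,0}$ is $2$-step nilpotent; one then chooses a frame adapted to the center of $\mathfrak{g}^{1,0}$ and feeds it into the norm-type identity obtained by tracing the fourth identity of Lemma 3.1, forcing $T=0$. For $s=1$ this mechanism collapses entirely ($C=0$, so there is no nilpotency information), and a genuinely different argument is needed: the operators $A_X(e_j)=\sum_{i,k}X_iT^k_{ij}e_k$ are shown to anticommute, the Claim in the proof of Theorem 2 of \cite{YZ} then produces a common kernel vector, and an induction on dimension kills $T$. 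Finally, the case $n=2$ is covered by neither argument and is handled by Theorem \ref{intro1}, whose proof needs the Jacobi identity to exclude the two exceptional values $s=\tfrac{2}{7}(3\pm\sqrt{2})$. None of these mechanisms --- nilpotency and the center, anticommuting torsion operators, the exceptional low-dimensional values of $s$ --- appear in your proposal, so what you have is the (correct) reduction to finite-dimensional algebra plus an acknowledgment that the algebra is the hard part, rather than a proof.
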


Under any left invariant frame, the torsion components of a left invariant connection are always constant ((\ref{torsion formula}) in Section 2). In particular, if a left invariant connection admits a left invariant parallel frame, then its torsion is parallel. By the result of Kamber and Tondeur \cite{KT} (see also \cite{Andrada2012}), given any smooth manifold $M$ equipped with a complete affine connection $\nabla$ which is flat and whose torsion is  parallel, then the universal cover of $M$ is a Lie group and $\nabla$ lifts to a left invariant connection. Hence theorem \ref{intro2} can be restated as follows:

\begin{theorem}\label{intro3}
Let $(M^n,g)$ be a complete Hermitian manifold with a flat $s$-Gauduchon connection $\nabla^s$ and with parallel torsion. Assume that $s\neq 0,2$, then its universal cover is holomorphically isometric to the complex vector group $\mathbb{C}^n$.
\end{theorem}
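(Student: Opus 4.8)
The plan is to reduce Theorem~\ref{intro3} to Theorem~\ref{intro2}, which I will take as already established. The bridge is the theorem of Kamber and Tondeur cited in the excerpt: for a complete affine connection that is flat and has parallel torsion, the universal cover carries a Lie group structure under which the connection becomes left invariant. So I would begin by verifying that the hypotheses of this structural result are met. The connection $\nabla^s$ is assumed flat; the torsion is assumed parallel; completeness of the Hermitian metric $g$ is in the hypotheses. The one point requiring a remark is that $\nabla^s$, being a canonical Hermitian connection, is compatible with $g$ (it satisfies $\nabla^s g=0$), hence $\nabla^s$-geodesics and $g$-geodesics coincide and $(M,g)$ complete forces $\nabla^s$ to be a complete affine connection. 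Once this is in hand, the Kamber--Tondeur theorem applies and gives a simply connected Lie group $G$, diffeomorphic to $\widetilde{M}$, on which the lifted $\nabla^s$ is left invariant.

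Next I would transport the Hermitian data to $G$. The metric $g$ and complex structure $J$ lift to $\widetilde{M}=G$; since $\nabla^s$ preserves both $g$ and $J$ and is now left invariant, and since the parallel torsion gives a $\nabla^s$-parallel frame, the lifted $g$ and $J$ are themselves left invariant. Concretely, flatness with parallel torsion yields a global $\nabla^s$-parallel tangent frame on the simply connected $G$ (obtained by parallel transport of a frame at the identity, which is path independent because the connection is flat), and this is exactly a left invariant $\nabla^s$-parallel frame. At this stage $(G,J,g)$ is a connected Lie group with a left invariant Hermitian structure whose $s$-Gauduchon connection is flat and admits a left invariant $\nabla^s$-parallel frame, with $s\neq 0,2$.

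With these hypotheses verified, Theorem~\ref{intro2} applies verbatim and yields two conclusions: the metric $g$ is K\"ahler and the group $G$ is abelian. Since $G$ is a simply connected abelian Lie group of dimension $2n$, it is isomorphic to $\mathbb{C}^n$ as a real Lie group, and the left invariant complex structure $J$ together with the left invariant K\"ahler metric makes $G$ holomorphically isometric to the flat complex vector group $\mathbb{C}^n$ with its standard metric. I would spell out this last identification briefly: an abelian Lie group with flat K\"ahler metric and compatible left invariant complex structure is, up to holomorphic isometry, $(\mathbb{C}^n, J_0, g_0)$, because a K\"ahler left invariant metric on an abelian group is flat and the complex structure is parallel, forcing the standard model.

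The only genuine obstacle I anticipate is the verification that completeness of $g$ transfers to completeness of the affine connection $\nabla^s$ in the precise sense demanded by the Kamber--Tondeur theorem. Because $\nabla^s$ is metric ($\nabla^s g=0$) but generally has nonzero torsion, its geodesics need not be the metric geodesics of $g$ unless the torsion is totally skew or one argues through the common affine geodesic equation; I would therefore check carefully that $\nabla^s$-geodesics are defined for all time, most cleanly by noting that any metric connection has the same geodesics as its symmetric part plus a torsion correction and that boundedness of the parallel (hence constant-norm) torsion prevents finite-time escape. Everything downstream of the structural theorem is then a direct invocation of Theorem~\ref{intro2} and the elementary classification of flat K\"ahler abelian groups.
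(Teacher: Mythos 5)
Your proposal follows exactly the paper's own route: the paper presents Theorem \ref{intro3} as a restatement of Theorem \ref{intro2}, obtained by invoking the Kamber--Tondeur theorem \cite{KT} to endow the universal cover with a Lie group structure under which the complete flat connection with parallel torsion becomes left invariant with its parallel frames left invariant (so that $g$ and $J$, being $\nabla^s$-parallel, are left invariant as well), and then applying Theorem \ref{intro2} to conclude that $g$ is K\"ahler and the group abelian, hence the universal cover is $\mathbb{C}^n$. The one blemish is your first paragraph's claim that $\nabla^s$-geodesics and $g$-geodesics coincide --- this is false unless the torsion is totally skew (as for $s=2$) --- but, as your closing paragraph itself concedes, it is not needed: since $\nabla^s g=0$, every $\nabla^s$-geodesic has constant $g$-speed, and this together with completeness of $(M,g)$ and the standard ODE-extension argument already gives geodesic completeness of $\nabla^s$, with no appeal to boundedness of the torsion.
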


As pointed out at the beginning of Section 4, a sufficient condition for the existence of a parallel left invariant  frame is that Lie group does not admit any non-trivial finite dimensional unitary representation. Hence, as a corollary of Theorem \ref{intro2} we have:

\begin{corollary}
Let $G$ be a connected Lie group equipped with a left invariant metric $g$ and a compatible left invariant complex structure $J$. Assume that the Hermitian manifold $(G,g)$ has a flat $s$-Gauduchon connection $\nabla^s$ and $s\neq 0, 2$. If $G$ is non-compact and contains no
non-trivial proper normal closed subgroup (or more generally, if $G$ admits no non-trivial finite dimensional unitary representation), then $g$ is K\"ahler and $G$ is abelian.
\end{corollary}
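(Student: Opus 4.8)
The plan is to deduce this statement directly from Theorem \ref{intro2} together with the sufficient condition recorded at the start of Section 4. The only real task is therefore to verify that each of the two stated hypotheses on $G$ guarantees that $G$ admits no non-trivial finite dimensional unitary representation, since once this is known the existence of a left invariant $\nabla^s$-parallel frame is automatic and Theorem \ref{intro2} applies verbatim to give that $g$ is K\"ahler and that $G$ is abelian.

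First I would recall why the absence of unitary representations forces a parallel frame. Writing the connection form of $\nabla^s$ in a left invariant unitary coframe, left invariance makes its coefficients constant and the Hermitian condition $\nabla^s g=\nabla^s J=0$ makes it take values in $\mathfrak{u}(n)$; flatness of $\nabla^s$ is then equivalent to the resulting linear map $\mathfrak{g}\to\mathfrak{u}(n)$ being a Lie algebra homomorphism, which integrates to a finite dimensional unitary representation. A left invariant frame is $\nabla^s$-parallel precisely when this homomorphism vanishes, so if $G$ admits no non-trivial finite dimensional unitary representation the homomorphism is zero and the left invariant frame itself is parallel. This is exactly the content we are entitled to cite from the beginning of Section 4.

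Next I would carry out the group-theoretic reduction for the first hypothesis. Let $\phi\colon G\to U(N)$ be a finite dimensional unitary representation; its kernel is a closed normal subgroup of $G$, so by assumption it is either all of $G$, in which case $\phi$ is trivial, or $\{e\}$, in which case $\phi$ is injective and $d\phi$ realizes $\mathfrak{g}$ as a subalgebra of the compact Lie algebra $\mathfrak{u}(N)$. Since every subalgebra of $\mathfrak{u}(N)$ is of compact type, $\mathfrak{g}$ would then be of compact type; but a non-compact connected Lie group with no non-trivial proper closed normal subgroup has simple Lie algebra of non-compact type (its centre and solvable radical, being closed and normal, must be trivial), which cannot embed into $\mathfrak{u}(N)$. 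Hence $\phi$ is trivial, and the first hypothesis reduces to the second. I would stress that non-compactness is essential at this step: a compact simple Lie group does admit faithful unitary representations, and in fact supports Bismut-flat Samelson structures, so the conclusion genuinely fails without it.

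Combining the two steps, under either hypothesis $G$ admits no non-trivial finite dimensional unitary representation, so a left invariant $\nabla^s$-parallel frame exists and Theorem \ref{intro2} yields that $g$ is K\"ahler and $G$ is abelian. The only delicate point is the reduction of the first hypothesis to the second, and within it the use of non-compactness to rule out the faithful, compact-type case; the rest is a direct appeal to Theorem \ref{intro2} and to the Section 4 observation.
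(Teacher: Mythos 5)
Your proof is correct and follows essentially the same route as the paper: the corollary is obtained by combining the observation at the start of Section 4 (absence of non-trivial finite dimensional unitary representations forces the homomorphism $p\colon {\mathfrak g}\to {\mathfrak u}(n)$, i.e.\ the connection coefficients $\Gamma$, to vanish, so the left invariant frame is $\nabla^s$-parallel) with Theorem \ref{intro2}. The only material you add is an explicit structure-theory verification that the first hypothesis (non-compact, no non-trivial proper closed normal subgroup) implies the second, a reduction the paper leaves implicit in its ``or more generally'' phrasing.
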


In order to prove our theorems, we show as preliminary results some algebraic relations on Hermitian Lie groups involving the structure constants of the Lie group, the coefficients of the Gauduchon connections and the coefficients of the Chern connection.

It is quite natural to wonder if the extra assumption in Theorem \ref{intro2} about the existence of a left invariant parallel frame could be removed.
%
%
%
Though we are working on left invariant geometry and the calculation is of local nature, it seems quite complicated to overcome some algebraic complexities arising in our computations
(see the observation after Lemma \ref{algebriac relation}), so we believe some new insights are needed to tackle it.

Note that if Question \ref{main question} has an affirmative answer,
i.e. the universal cover of $(G,g)$ is $\mathbb{C}^n$ equipped with
the Euclidean metric, it is not necessarily true that the
corresponding Lie group is the complex vector group. In fact, there
are examples of non-abelian Lie groups which admits left invariant
flat K\"ahler structures, see e.g. \cite{BDF} and the appendix at
the end of the present paper. The results in \cite{BDF} give a
K\"ahler analogue of Milnor's classification theorem on flat Lie
groups with left-invariant Riemannian metrics \cite{Milnor}. As an
application we have the following improvement of Theorem
\ref{intro1}:

\begin{corollary} \label{flat Kahler intro}
Let $(G, J, g)$ be as in  the statement of Theorem \ref{intro1}. Then either $G$ is isomorphic to a complex vector group or its
Lie algebra $\mathfrak{g}$ admits the following orthogonal decomposition:
\[
\mathfrak{g}=\mathfrak{c} \oplus \mathfrak{k}, \ \text{where the subalgebras} \ \mathfrak {c}=\operatorname{span}\{W\} , \text{and}\ \mathfrak {k}=\operatorname{span}\{X, Y, Z\}.
\]
Here $\{X, Y, Z, W\}$ is an orthonormal basis of $\mathfrak {g}$,
$\mathfrak {c}$ is the center of $\mathfrak{g}$, $[X, Y]=qZ$, $[X,
Z]=-qY$, $[Y,Z]=0$ where $q \neq 0$ is any given real number, and
the complex structure $J$ is defined by $JX=W$ and $JY=Z$.
\end{corollary}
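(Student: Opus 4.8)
The plan is to combine Theorem \ref{intro1} with the classification of flat K\"ahler Lie groups of \cite{BDF}, specialised to real dimension four. First I would observe that by Theorem \ref{intro1} the metric $g$ is K\"ahler; hence the Chern, Bismut and Levi-Civita connections all coincide, so that $\nabla^s=\nabla^c=\nabla^b$ is the Levi-Civita connection $\nabla$ of $g$. Since $\nabla^s$ is assumed flat, $(G,J,g)$ is a flat K\"ahler Lie group, and the problem reduces to classifying the four-dimensional ones.

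Next I would recall Milnor's structure theorem for flat left-invariant metrics, the Riemannian input underlying \cite{BDF}: the Lie algebra splits as an orthogonal direct sum $\mathfrak{g}=\mathfrak{b}\oplus\mathfrak{u}$, with $\mathfrak{b}$ an abelian subalgebra, $\mathfrak{u}$ an abelian ideal containing $[\mathfrak{g},\mathfrak{g}]$, and $\mathrm{ad}_b$ skew-symmetric for every $b\in\mathfrak{b}$. A short computation of $\nabla$ in such a frame gives $\nabla_u=0$ for $u\in\mathfrak{u}$ and $\nabla_b=\mathrm{ad}_b$ on $\mathfrak{u}$ and $\nabla_b=0$ on $\mathfrak{b}$. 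I would then translate the K\"ahler condition $\nabla J=0$ into the single algebraic requirement that each skew-symmetric operator $A_b:=\nabla_b$ commutes with $J$; since an operator commuting with $J$ is complex-linear, its kernel is a complex (hence even-dimensional) subspace containing $\mathfrak{b}$. This turns the whole question into linear algebra over the $J$-commuting skew-symmetric operators $A_b$.

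Then I would carry out the dimension bookkeeping according to $\dim\mathfrak{b}\in\{0,1,2,3\}$. If $\dim\mathfrak{b}=0$ (equivalently all $A_b=0$) then $\mathfrak{g}$ is abelian and $G$ is a complex vector group. The case $\dim\mathfrak{b}=3$ forces $\dim\mathfrak{u}=1$, on which a skew-symmetric operator must vanish, so $\mathfrak{g}$ is again abelian. For $\dim\mathfrak{b}=1$ the complex kernel of a nonzero $A_b$ must be the complex line $\mathbb{R}b\oplus\mathbb{R}Jb$, which forces $Jb\in\mathfrak{u}$ to be central and $\mathrm{ad}_b$ to act as a nonzero rotation on the remaining $J$-invariant plane $V\subseteq\mathfrak{u}$; for $\dim\mathfrak{b}=2$ the same principle forces $\mathfrak{b}$ and $\mathfrak{u}$ to be complex lines with $\mathrm{ad}_b$ a rotation of $\mathfrak{u}$. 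I expect both surviving cases to produce the same Lie algebra $\mathfrak{g}\cong\mathfrak{se}(2)\oplus\mathbb{R}$, the two descriptions corresponding to whether the central direction $Jb$ is placed in $\mathfrak{u}$ or in $\mathfrak{b}$. Finally I would normalise an orthonormal basis adapted to this structure and read off $[X,Y]=qZ$, $[X,Z]=-qY$, $[Y,Z]=0$ together with $JX=W$, $JY=Z$, the center being $\mathbb{R}W=\mathbb{R}Jb$, exactly as asserted.

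The main obstacle I anticipate is this last reconciliation: because the Milnor decomposition $\mathfrak{g}=\mathfrak{b}\oplus\mathfrak{u}$ is not canonical, the superficially distinct cases $\dim\mathfrak{b}=1$ and $\dim\mathfrak{b}=2$ must be shown to describe one and the same Hermitian Lie algebra, and the rescaling producing the orthonormal frame must be arranged so that the rotation constant $q$ and the action of $J$ come out precisely as stated. Checking that the resulting data genuinely defines a flat K\"ahler metric, rather than merely satisfying the necessary conditions, would round off the argument and is what the explicit construction in the appendix provides.
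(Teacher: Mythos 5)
Your proposal is correct, and it reaches the conclusion by the same first step as the paper (Theorem \ref{intro1} gives K\"ahler, hence $\nabla^s=\nabla^c=\nabla^b=\nabla$ is the flat Levi--Civita connection, so $(G,J,g)$ is a flat K\"ahler Lie group), but it diverges in which structure theorem it then invokes. The paper does not redo any linear algebra: it quotes the Barberis--Dotti--Fino classification (Theorem \ref{flat k}, from \cite{BDF}), whose decomposition $\mathfrak{g}=\mathfrak{h}\oplus\mathfrak{c}\oplus[\mathfrak{g},\mathfrak{g}]$ is canonical ($\mathfrak{c}$ is the center, $[\mathfrak{g},\mathfrak{g}]$ is $J$-invariant and even-dimensional, $\operatorname{ad}$ is \emph{injective} on $\mathfrak{h}$ with image in $J$-commuting skew-symmetric maps, and $J$ pairs $\mathfrak{h}_1$ with $\mathfrak{c}_1$); with these facts the four-dimensional enumeration is immediate and unambiguous, since $\dim[\mathfrak{g},\mathfrak{g}]\in\{0,2\}$ and the case $\dim[\mathfrak{g},\mathfrak{g}]=2$ forces $\dim\mathfrak{h}=\dim\mathfrak{c}=1$, $J\mathfrak{h}=\mathfrak{c}$, and a single rotation parameter $q\neq 0$. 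You instead work from Milnor's raw theorem \cite{Milnor} and re-derive the relevant parts of \cite{BDF} yourself: the identification $\nabla_b=\operatorname{ad}_b$, the translation of $\nabla J=0$ into $[A_b,J]=0$, and the fact that kernels of the $A_b$ are $J$-invariant. This is all sound, but because Milnor's splitting $\mathfrak{g}=\mathfrak{b}\oplus\mathfrak{u}$ is not canonical you are saddled with the extra reconciliation of the overlapping cases $\dim\mathfrak{b}=1$ and $\dim\mathfrak{b}=2$ (which, as you correctly anticipate, describe the same Hermitian Lie algebra, the choice being whether the central direction $Jb$ is counted in $\mathfrak{u}$ or in $\mathfrak{b}$). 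In short: your route is more self-contained and re-proves the needed fragment of the BDF theorem in dimension four, while the paper's route buys brevity and avoids the case-merging issue by citing the canonical decomposition outright; both correctly yield the dichotomy of Corollary \ref{flat Kahler intro}.
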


Finally, we would like to point out some interesting recent works relating left invariant geometry of complex Lie groups and Gauduchon connections: Fei-Yau found in \cite{FeiYau} invariant solutions to the Hull-Strominger system on unimodular complex Lie groups and Phong-Picard-Zhang \cite{PPZ} studied solutions to the Anomaly Flow on unimodular complex $3$-dimensional Lie groups.

\vs

\section{Lie groups as Hermitian manifolds} \label{surface}

\vsv

Let $G$ be a connected Lie group of real dimension $2n$, and denote by ${\mathfrak g}$ its Lie algebra. Let $g$ be a left invariant metric on $G$ and $J$ a left invariant complex structure on $G$ that is compatible with $g$. Note that the Hermitian manifold $(G,g)$ of complex dimension $n$ is always complete.

Left invariant metrics $g$ on $G$ are in one one correspondence with inner products $\langle \, , \rangle$ on ${\mathfrak g}$, while (compatible) left invariant complex structures on $G$ are in one one correspondence with almost complex structures $J$ on the vector space ${\mathfrak g}$ (compatible with $\langle \, , \rangle$) satisfying the integrability condition
\begin{equation*}
[x,y] - [Jx,Jy] + J[Jx,y] + J[x,Jy] = 0, \ \ \forall \ x,y \in {\mathfrak g},
\end{equation*}
or equivalently,
\begin{equation}
[{\mathfrak g}^{1,0}, \, {\mathfrak g}^{1,0} ] \subseteq {\mathfrak g}^{1,0},
\end{equation}
where ${\mathfrak g}^{1,0}$ is the $\sqrt{\!-\!1}$-eigenspace of $J$ in ${\mathfrak g}^{\mathbb C}={\mathfrak g} \otimes {\mathbb C}$.

Let $\{ e_1, \ldots , e_n\}$ be a unitary basis of ${\mathfrak g}^{1,0}$. We will also use $e_i$ to denote the corresponding left invariant vector field on $G$, so it becomes a global unitary frame of type $(1,0)$ tangent fields on the complete Hermitian manifold $(G,g)$.  The integrability condition can be expressed as
\begin{equation}
\langle [e_i,e_j],\, e_k\rangle = 0 , \ \ \ \ \forall  \ 1\leq i, j, k \leq n.
\end{equation}
Let $\nabla$ be the Levi-Civita connection. Since $\nabla$ is torsion free, we have
\begin{equation}
2\langle \nabla_{x}y, z\rangle = x\langle y, z\rangle + y\langle x, z\rangle - z\langle x, y\rangle +\langle [x,y],\, z\rangle -\langle [x,z], \,y\rangle - \langle [y,z], \,x\rangle
\end{equation}
for any vector fields $x$, $y$, $z$ on $G$. Let us write $e=\, ^t\!(e_1, \ldots , e_n)$, and let $\{ \varphi_1, \ldots , \varphi_n\}$ be the coframe of $(1,0)$-forms dual to $e$. Denote by $T^k_{ij}$  the components of the Chern torsion under the frame $e$, namely,  $ T^c(e_i,e_j) = 2\sum_{k=1}^n T_{ij}^k e_k $. Following the notations of  \cite{YZ} and \cite{YZ-Gflat}, we have
\begin{eqnarray*}
\nabla e & = &  \nabla^1e + \beta e = \nabla^c e + \gamma e + \beta e,\\
\beta e_i & = &  T_{ij}^k \, \overline{\varphi}_k \overline{e}_j, \\
\gamma e_i & = & (T_{ik}^j \varphi_k - \overline{T^i_{jk}} \, \overline{\varphi}_k) e_j.
\end{eqnarray*}
So by the formula for $\nabla$ we get
\begin{equation}
2 T_{ij}^k = 2\langle \beta(\overline{e}_k) e_i, e_j\rangle =  \langle [\overline{e}_k, e_i], \, e_j \rangle  - \langle [\overline{e}_k,  e_j], \, e_i \rangle  - \langle [e_i, e_j], \, \overline{e}_k  \rangle. \label{torsion}
\end{equation}
From this, a direct computation leads to
\begin{eqnarray}
\langle \nabla^c_{e_k}e_i , \, \overline{e}_j\rangle  & = &  \langle [\overline{e}_j, e_k], \, e_i \rangle \\
\nabla^c_{\overline{e}_k}e_i & = & [\overline{e}_k, e_i]^{1,0},\\
2 \langle \gamma (e_k) e_i,\, \overline{e}_j\rangle & = & 2T_{ik}^j \ = \ \langle [\overline{e}_j, e_i], \,e_k \rangle  - \langle [\overline{e}_j,  e_k], \, e_i \rangle  - \langle [e_i, e_k],\, \overline{e}_j  \rangle, \\
\langle \nabla^s_{e_k}e_i , \, \overline{e}_j\rangle  & = &  (1-\frac{s}{2})  \langle [\overline{e}_j, e_k], \,e_i \rangle  +\frac{s}{2}  \langle [\overline{e}_j, e_i], \,e_k \rangle - \frac{s}{2} \langle [e_i, e_k],\, \overline{e}_j  \rangle, \\
\langle \nabla^s_{\overline{e}_k} e_i , \, \overline{e}_j\rangle  & = &  (1-\frac{s}{2})  \langle [\overline{e}_k, e_i], \,\overline{e}_j \rangle  +\frac{s}{2}  \langle [\overline{e}_j, e_i], \, \overline{e}_k \rangle + \frac{s}{2} \langle [\overline{e}_j, \overline{e}_k],\, e_i  \rangle, \\
\langle \nabla^b_{e_k}e_i , \, \overline{e}_j\rangle  & = &   \langle [\overline{e}_j, e_i], \,e_k \rangle -  \langle [e_i, e_k],\, \overline{e}_j  \rangle.  \label{b-connection}
\end{eqnarray}
Here we used the fact that $\nabla^s=\nabla^c+s\gamma$, and $\nabla^c=\nabla^0$, $\nabla^b=\nabla^2$. In the following, let us denote by
$$ C_{ik}^j = \langle [e_i, e_k], \overline{e}_j\rangle , \ \ \  D^j_{ik}= \langle [\overline{e}_j, e_k], \,e_i \rangle$$
 for the structure constants of ${\mathfrak g}$, and denote by  $\Gamma_{ik}^j = \langle \nabla^s_{e_k} e_i , \, \overline{e}_j\rangle$ the connection components of $\nabla^s$ under the frame $e$. Then we have
\begin{eqnarray}
\Gamma_{ik}^j  & = &  D^j_{ik} + sT^j_{ik} \\
2T_{ik}^j & = & -D_{ik}^j + D_{ki}^j -C_{ik}^j   \label{torsion formula}\\
  T^j_{ik,\ell }  & = &  \sum_{r=1}^n \big(\!-T^j_{rk}\Gamma^r_{i\ell} - T^j_{ir}\Gamma^r_{k\ell} + T^r_{ik}\Gamma^j_{r\ell} \big) \\
   T^j_{ik,\overline{\ell }}  & = &  \sum_{r=1}^n \big( T^j_{rk}\overline{\Gamma^i_{r\ell}} + T^j_{ir}  \overline{\Gamma^k_{r\ell}} - T^r_{ik} \overline{\Gamma^r_{j\ell}} \big)
\end{eqnarray}
for any indices $i$, $j$, $k$, and $\ell$.  Here the indices appearing after the ``," stand for covariant differentiation with respect to the connection $\nabla^s$.

Since $ [\overline{e}_j,e_i] = \sum_{k} \big( D^j_{ki}\overline{e}_k - \overline{D^i_{kj}} e_k \big)$, and the components of $C$, $D$, $T$,  $\Gamma$ are all constants, the Jacobi identity for $\{ e_i,e_j,e_k\}$ and for $\{ e_i, \overline{e}_j, e_k\}$ will give us the following identities

\begin{lemma}
Let $G$ be a Lie group equipped with a left invariant metric $g$ and a compatible left invariant complex structure. Let $e$ be a left invariant unitary frame on $G$, and denote by $C$, $D$ the structure constants. The Jacobi identities are
\begin{eqnarray}
&& \sum_{r=1}^n \big( C^r_{ij}C^{\ell}_{rk} + C^r_{jk}C^{\ell}_{ri} + C^r_{ki}C^{\ell}_{rj} \big) \ =\ 0 \\
 && \sum_{r=1}^n \big(  C^r_{ik} D^{\ell}_{j r} + D^r_{ji} D^{\ell}_{rk} - D^r_{jk} D^{\ell}_{ri}   \big) \ =\ 0 \\
 && \sum_{r=1}^n \big(   C^r_{ik} \overline{D^r_{j\ell }} - C^{j}_{rk} \overline{D^i_{r\ell}} + C^{j}_{ri} \overline{D^k_{r \ell}} - D^{\ell}_{ri} \overline{D^k_{j r}} + D^{\ell}_{rk} \overline{D^i_{j r}}  \big) \ =\ 0 \label{eq:thirdJacobi}
\end{eqnarray}
for any indices $i$, $j$, $k$, and $\ell$.
\end{lemma}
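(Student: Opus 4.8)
The plan is to reduce everything to the two basic bracket relations in the unitary frame and then expand the ordinary Jacobi identity $[[x,y],z]+[[y,z],x]+[[z,x],y]=0$ on the two types of triples. First I would record the brackets explicitly. Integrability together with the definition of $C$ gives the purely holomorphic bracket
\[
[e_i,e_j]=\sum_{k=1}^n C_{ij}^k\, e_k,
\]
while the mixed bracket is exactly the relation $[\overline{e}_j,e_i]=\sum_k\big(D^j_{ki}\overline{e}_k-\overline{D^i_{kj}}\,e_k\big)$ recalled just before the statement; the remaining bracket $[e_i,\overline{e}_j]$ is then obtained by antisymmetry. These three formulas express every Lie bracket among the $e_a$ and $\overline{e}_b$ in terms of $C$, $D$ and $\overline{D}$, and this is all that enters the computation. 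Note that $\nabla^s$ plays no role here: the lemma is purely a statement about the structure constants.

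Next I would feed the all-holomorphic triple $(e_i,e_j,e_k)$ into the Jacobi identity. Applying the holomorphic bracket twice, each double commutator $[[e_i,e_j],e_k]$ becomes $\sum_{\ell}\big(\sum_r C_{ij}^r C_{rk}^\ell\big)e_\ell$, so summing the three cyclic terms and reading off the coefficient of $e_\ell$ yields the first identity. This step is routine: no conjugations appear, and the cyclic symmetry $i\to j\to k$ is inherited directly from the Jacobi identity.

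The substantive computation is the mixed triple $(e_i,\overline{e}_j,e_k)$. Here the inner brackets $[e_i,\overline{e}_j]$, $[\overline{e}_j,e_k]$ and $[e_k,e_i]$ each produce both $\overline{e}$- and $e$-components, so the second commutation spreads every term across the whole frame. I would expand all three double commutators, separate the result into its $\overline{e}_\ell$-part and its $e_\ell$-part, and set each to zero. The $\overline{e}_\ell$-part involves only products of two $D$'s together with a single term mixing $C$ and $D$, and carries no conjugations; after renaming the free indices and using the antisymmetry $C_{ik}^r=-C_{ki}^r$ it matches the second identity. The $e_\ell$-part carries the conjugations coming from the $\overline{D}$ in each mixed bracket, and collecting those terms produces the third identity (\ref{eq:thirdJacobi}).

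I expect the only real obstacle to be bookkeeping rather than any conceptual difficulty: in the mixed triple one must track, for each of the three inner brackets, which summand lands in $\mathfrak{g}^{1,0}$ and which in its conjugate $\overline{\mathfrak{g}^{1,0}}$, and then carry the conjugation bars and the two subscript positions of $D$ correctly through the second bracketing. A useful consistency check is that the $\overline{e}_\ell$-component must come out free of conjugations while every surviving term in the $e_\ell$-component must carry exactly one conjugation; this is forced by type considerations and lets one catch index errors immediately.
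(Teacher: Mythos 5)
Your proposal is correct and is exactly the paper's (implicit) argument: the paper derives the lemma precisely by expanding the Jacobi identity on the triples $\{e_i,e_j,e_k\}$ and $\{e_i,\overline{e}_j,e_k\}$ using $[e_i,e_j]=\sum_k C^k_{ij}e_k$ and $[\overline{e}_j,e_i]=\sum_k\big(D^j_{ki}\overline{e}_k-\overline{D^i_{kj}}e_k\big)$, with the second and third identities coming from the $\overline{e}_\ell$- and $e_\ell$-components of the mixed triple, just as you describe. Your type-based consistency check (no conjugations in the $\overline{e}_\ell$-part, exactly one conjugation per term in the $e_\ell$-part) is sound and indeed matches the stated identities.
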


\vs

\section{Hermitian manifolds with flat $s$\,-Gauduchon connection}

\vsv

Let $(M^n,g)$ be a Hermitian manifold with flat $s$-Gauduchon connection $\nabla^s$. Under any local unitary tangent frame $\{ E_1, \ldots , E_n\}$, let us denote by $T^k_{ij}$ the components of the Chern torsion under $E$:
$$ T^c(E_i, E_j) = \sum_{k=1}^n 2T^k_{ij} E_k.$$
We first prove the following:

\begin{lemma}
Let $(M^n,g)$ be a $\nabla^s$-flat Hermitian manifold, with $s\neq 0$. Then under any unitary frame $E$, the components of the Chern torsion $T^c$ and its $\nabla^s$-covariant differentiation will  satisfy the following:
\begin{eqnarray}
   T^{\ell}_{ij,k} - T^{\ell}_{ik,j} & = & 2(1-s) \, T^{\ell }_{ir} T^r_{jk} + s \, T^{\ell }_{jr} T^r_{ik} - s \, T^{\ell }_{kr} T^r_{ij} \\
 0 & = &   (n-2)(s-1) \, \big( T^{\ell }_{ir} T^r_{jk} +  T^{\ell }_{jr} T^r_{ki} + T^{\ell }_{kr} T^r_{ij} \big)  \\
 T^{\ell}_{ij,k} - T^{\ell}_{ik,j} & = & (2-s)\, T^{\ell }_{ir} T^r_{jk}   \ \ \ \ \ \ \  \mbox{if} \ \ n\geq 3  \ \ \mbox{and} \ \ s\neq 1\\
4(s-1)(2s-1)\,  T^k_{ij, \,\overline{\ell }} & = & -4s(s-1)^2 \, T^r_{ij} \overline{T^r_{k\ell }} -s\, (5s^2-10s+4) \, \big(  T^k_{ir} \overline{ T^j_{\ell r}} - T^k_{jr} \overline{ T^i_{\ell r}}  \big) \\
\nonumber  & & +\, s^3\, \big(  T^{\ell}_{ir} \overline{ T^j_{k r}} - T^{\ell}_{jr} \overline{ T^i_{k r}} \big)
\end{eqnarray}
for any indices $i$, $j$, $k$, and $\ell$.
\end{lemma}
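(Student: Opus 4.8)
The plan is to compute the curvature of $\nabla^s$ and read the four identities off the vanishing of its various bidegree components, using throughout the classical fact that the curvature of the Chern connection is of pure type $(1,1)$. Writing $\nabla^s=\nabla^c+s\gamma$ with the difference tensor $\gamma$ as in Section~2 (so $\gamma_{E_k}E_i=T^j_{ik}E_j$ and $\gamma_{\overline E_k}E_i=-\overline{T^i_{jk}}E_j$), the structure expansion gives $R^s=R^c+s\,d^{\nabla^c}\gamma+s^2\,\gamma\wedge\gamma$. A short preliminary computation records the torsion $T^s$ of $\nabla^s$ in terms of the Chern torsion: its $(2,0)$-part is $(1-s)\,T^c$, while its mixed part is $T^s(E_i,\overline E_j)=s\big(\overline{T^i_{mj}}\,E_m-T^j_{mi}\,\overline E_m\big)$, of order $s$. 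Since $R^s$ is $\mathfrak u(n)$-valued, the flatness equation $R^s=0$ separates into its $(2,0)$, $(1,1)$ and $(0,2)$ parts; because $R^c$ has no $(2,0)$-part, the $(2,0)$ equation is already a relation in the torsion alone and will yield (1)--(3), while the $(1,1)$ equation will yield (4).

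For the first three identities I would expand the $(2,0)$-part of $0=R^c+s\,d^{\nabla^c}\gamma+s^2\,\gamma\wedge\gamma$. As $R^{c}$ contributes nothing here, this reads $(d^{\nabla^c}\gamma)^{(2,0)}+s\,(\gamma\wedge\gamma)^{(2,0)}=0$; the first term produces the covariant derivatives $T^\ell_{ij,k}$ antisymmetrized in the two holomorphic slots, the second the bilinears $T^\ell_{ir}T^r_{jk}$. Converting the Chern covariant derivatives into $\nabla^s$-covariant derivatives (which only adds further torsion bilinears) gives exactly (1). Identity (2) is the derivative-free constraint that flatness imposes on the torsion: I would obtain it by eliminating the first-order terms among the curvature identities and then contracting, the dimension surfacing as the factor $(n-2)(s-1)$ in front of the cyclic bilinear $T^\ell_{ir}T^r_{jk}+T^\ell_{jr}T^r_{ki}+T^\ell_{kr}T^r_{ij}$. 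Finally (3) is elementary algebra for $n\ge 3$, $s\neq1$: identity (2) forces that cyclic sum to vanish, whence, using $T^r_{ab}=-T^r_{ba}$, one checks $T^\ell_{jr}T^r_{ik}-T^\ell_{kr}T^r_{ij}=T^\ell_{ir}T^r_{jk}$, so the right-hand side $2(1-s)T^\ell_{ir}T^r_{jk}+s\,T^\ell_{jr}T^r_{ik}-s\,T^\ell_{kr}T^r_{ij}$ of (1) collapses to $(2-s)\,T^\ell_{ir}T^r_{jk}$.

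Identity (4) comes from the $(1,1)$-sector. I would apply the first Bianchi identity of the flat connection $\nabla^s$ with inputs $(E_i,E_j,\overline E_\ell)$ (two holomorphic, one anti-holomorphic), projected onto $E_k$: since the mixed torsion $T^s(E_i,\overline E_j)$ carries both a $(1,0)$- and a $(0,1)$-piece, this produces, besides the wanted term $T^k_{ij,\overline\ell}$ with the naive coefficient $2(1-s)$, the bilinears $T^r_{ij}\overline{T^r_{k\ell}}$, $T^k_{ir}\overline{T^j_{\ell r}}$, $T^\ell_{ir}\overline{T^j_{kr}}$ and their conjugate-derivative partners $\overline{T^{\,\cdot}_{\cdot\cdot,\overline\cdot}}$. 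To remove the latter I would combine this with the direct vanishing of the $(1,1)$-curvature $\langle R^s(E_i,\overline E_\ell)E_j,\overline E_k\rangle=0$, using the skew-Hermitian symmetry of $R^s$ to trade a conjugated mixed derivative for $T^k_{ij,\overline\ell}$ itself. It is precisely this combination of the Bianchi relation with the direct flatness relation — mixing contributions of orders $s^0$, $s^1$, $s^2$ — that boosts the coefficient of $T^k_{ij,\overline\ell}$ from $2(1-s)$ to $4(s-1)(2s-1)$ and generates the cubic coefficients $s(5s^2-10s+4)$ and $s^3$.

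The step I expect to be the real obstacle is this last one: in (4) one must simultaneously manage the two pieces of the mixed torsion, the conjugate mixed covariant derivatives, and the $(1,1)$-contributions of $R^c$ together with those of the $s$- and $s^2$-terms, and then combine several flatness relations so that every unwanted derivative term cancels while the precise polynomial coefficients in $s$ survive. The bookkeeping is far heavier than for (1)--(3) and is where sign and coefficient errors are most likely; organizing the computation strictly by powers of $s$ and by bidegree is how I would try to keep it under control.
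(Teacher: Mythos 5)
Your overall architecture (split the flatness of $\nabla^s$ by bidegree, use the $(2,0)$-sector for (1)--(3) and the mixed sector for (4)) is sound; it is essentially how these identities are derived in \cite{YZ-Gflat}, whereas the paper under review does not re-derive them: it observes that both sides are tensorial, so everything can be checked in a $\nabla^s$-parallel unitary frame (covariant derivatives become ordinary derivatives), quotes (1)--(3) and the mixed Bianchi identity (formula (21) of \cite{YZ-Gflat}) directly, and only proves (4). Your computation of the torsion of $\nabla^s$ (the $(2,0)$-part $(1-s)T^c$ and the mixed part) is correct, and your derivation of (3) from (1) and (2) --- vanishing of the cyclic sum forces $T^\ell_{jr}T^r_{ik}-T^\ell_{kr}T^r_{ij}=T^\ell_{ir}T^r_{jk}$, collapsing the right side of (1) to $(2-s)\,T^\ell_{ir}T^r_{jk}$ --- is exactly right.

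There are, however, two genuine gaps. First, for (2) you only say you would ``eliminate the first-order terms and contract''; the factor $(n-2)(s-1)$ is never actually produced, and this step is not routine bookkeeping. Second, and more seriously, identity (4) --- the only one the paper actually proves --- is left as an acknowledged open computation, and the tool you propose cannot work as stated: the ``skew-Hermitian symmetry of $R^s$'' carries no information because $R^s\equiv 0$, while invoking the $(1,1)$-part of $R^s=R^c+s\,d^{\nabla^c}\gamma+s^2\,\gamma\wedge\gamma$ reintroduces the unknown Chern curvature $R^c$ rather than eliminating the conjugated derivatives. What actually closes the argument is a pairing trick on the $(2,1)$-Bianchi identity of the flat connection alone. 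Schematically that identity reads $2(s-1)\,T^k_{ij,\overline{\ell}}+s\,\bigl(\overline{T^i_{k\ell,\overline{j}}}-\overline{T^j_{k\ell,\overline{i}}}\bigr)=X^{k\ell}_{ij}$ with $X$ quadratic in $T$; antisymmetrizing in $k,\ell$ gives $2(s-1)x+2sy=u$ where $x=T^k_{ij,\overline{\ell}}-T^\ell_{ij,\overline{k}}$, $y=\overline{T^i_{k\ell,\overline{j}}-T^j_{k\ell,\overline{i}}}$, $u=X^{k\ell}_{ij}-X^{\ell k}_{ij}$; applying the same identity with the index pairs $(i,j)$ and $(k,\ell)$ exchanged and conjugating gives $2sx+2(s-1)y=v$; one then checks from the explicit form of $X$ that $u=v$, whence $2(2s-1)x=2(2s-1)y=u$, and substituting back yields (4) with its coefficient $4(s-1)(2s-1)$ (note no division by $2s-1$ or $s-1$ is ever needed, so the identity holds for all $s$). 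Without identifying this symmetric $2\times 2$ linear system in $x$ and $y$, the conjugated derivative terms you worry about have no mechanism to cancel, so as it stands your proposal does not yield (4).
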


\begin{proof}
Note that the validity of these identities are independent of the choice of unitary frames here as both sides representing tensors. So we may prove them under a local unitary frame $E$ that is $\nabla^s$-parallel, which exists since $g$ is $\nabla^s$-flat. The first three identities are directly from Lemma 3.1 and Lemma 3.2 in \cite{YZ-Gflat}. So we only need to prove the last identity. Let us denote by $X_{ij}^{k\ell}$ the right hand side of the formula (21) in Lemma 3.1 of \cite{YZ-Gflat}. Write
$$ u = X_{ij}^{k\ell} - X_{ij}^{\ell k}, \ \ \ v = \overline{X^{ij}_{k\ell} - X^{ji}_{k\ell} }, \ \ \ x= T^k_{ij, \,\overline{\ell }} - T^{\ell }_{ij, \,\overline{k }}, \ \ \ y = \overline{   T^i_{k\ell , \,\overline{j }} - T^{j }_{k\ell , \,\overline{i }} }  $$
Then we get from that formula (21) of \cite{YZ-Gflat} that
$$ 2(s-1)x+2sy =u, \ \ \ 2sx+2(s-1)y = v.$$
Since $u=v$, we get $2(2s-1)x=2(2s-1)y=u$. Plug this back into that formula (21), we obtain the last identity of the present lemma.
\end{proof}

In the last identity of Lemma 3.1, if we take $s=\frac{1}{2}$, and let $i=k$, $j=\ell$ and sum them over, we get
$$ 0 = \frac{1}{4} \sum_{i,j,\,r=1}^n |T^j_{ir}|^2 + \frac{1}{4} \sum_{r=1}^n |\eta_r|^2 $$
where $\eta_r = \sum_k T^k_{kr}$ are the components of the Gauduchon's torsion $1$-form. So when $s=\frac{1}{2}$, we always have $T=0$, that is, any $\nabla^{\frac{1}{2}}$-flat Hermitian manifold is K\"ahler. This is observed in Proposition 1.8 of \cite{YZ-Gflat}.

Next let us specialize to the $2$-dimensional case. Let $(M^2,g)$ be a $\nabla^s$-flat Hermitian surface with $s\neq 0$. At any point in $M^2$, by rotating the unitary frame $E$ if necessary, we may always assume that $T^2_{12}=0$ at this point. In fact, assuming that $g$ is not K\"ahler, then in the open subset of $M^2$ where $T$ is not identically zero, there exists local unitary frame $E$ such that $T^2_{12}=0$, and $T^1_{12}=\lambda \neq 0$. Under such a frame, by taking $i=1$, $j=2$ in the first and last identity of Lemma 3.1, we get the following:

\begin{lemma}
Let $(M^2,g)$ be a $\nabla^s$-flat Hermitian surface with $s\neq 0$. Suppose $E$ is a local unitary frame such that $T^2_{12}=0$. Then we have
\begin{eqnarray}
&& T^1_{12,1}=T^2_{12,1}=T^2_{12,2}=0, \ \ \ T^1_{12,2} = (2-s)\lambda^2 ,\\
&& 4(s-1)(2s-1) \,T^1_{12, \overline{1}} = 4(s-1)(2s-1) \,T^2_{12, \overline{2}} = 0 ,\\
&& 4(s-1)(2s-1) \,T^1_{12, \overline{2}} = s^2(s-2)|\lambda |^2 ,\\
&& 4(s-1)(2s-1) \,T^2_{12, \overline{1}} = s\,(3s^2-8s+4)|\lambda |^2,
\end{eqnarray}
where $\lambda = T^1_{12}$.
\end{lemma}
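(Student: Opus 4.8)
The plan is to feed the special structure of the frame $E$ into the first and the last identities of Lemma 3.1 and simply read off the four groups of equations. First I would record the structural simplification forced by complex dimension $2$: the Chern torsion is antisymmetric in its lower indices, so in a unitary frame of $\mathfrak{g}^{1,0}$ the only possibly nonzero components are $T^1_{12}$, $T^2_{12}$, together with $T^k_{21}=-T^k_{12}$. The hypothesis $T^2_{12}=0$ therefore leaves a single nonzero component $\lambda=T^1_{12}=-T^1_{21}$, while every component with a repeated lower index, or with upper index $2$, vanishes. This sparsity is exactly what makes the quadratic right-hand sides collapse to one or two surviving terms.

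For the holomorphic derivatives $T^{\ell}_{12,k}$ I would specialize the first identity of Lemma 3.1,
\[
T^{\ell}_{ij,k} - T^{\ell}_{ik,j} = 2(1-s)\, T^{\ell}_{ir}T^r_{jk} + s\, T^{\ell}_{jr}T^r_{ik} - s\, T^{\ell}_{kr}T^r_{ij}.
\]
Taking $(i,j,k)=(1,2,1)$ the left side reduces to $T^{\ell}_{12,1}$ (the term $T^{\ell}_{11,2}$ drops), while on the right every product vanishes once the antisymmetry and $T^2_{12}=0$ are imposed; hence $T^1_{12,1}=T^2_{12,1}=0$. The direct choice $(i,j,k)=(1,2,2)$ is only the tautology $0=0$, so to reach the $e_2$-derivatives I would instead use its $i\leftrightarrow j$ antisymmetric partner $(i,j,k)=(2,1,2)$, whose left side is $-T^{\ell}_{12,2}$. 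Keeping the surviving $r=1$ contributions, they combine to $(s-2)\lambda^2$ for $\ell=1$ and to $0$ for $\ell=2$, which yields $T^1_{12,2}=(2-s)\lambda^2$ and $T^2_{12,2}=0$.

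For the anti-holomorphic derivatives I would substitute $(i,j)=(1,2)$ into the last identity of Lemma 3.1 and evaluate at each $(k,\ell)\in\{1,2\}^2$. Since $T^2_{\bullet}\equiv 0$, the first half of each of the middle and last brackets drops out, and each of the three remaining products is nonzero for only one specific $(k,\ell)$. The diagonal choices $(1,1)$ and $(2,2)$ annihilate all three terms, giving $4(s-1)(2s-1)\,T^1_{12,\overline1}=4(s-1)(2s-1)\,T^2_{12,\overline2}=0$. For $(k,\ell)=(1,2)$ two terms survive and I would collect the coefficient $-4s(s-1)^2+s(5s^2-10s+4)=s^2(s-2)$; for $(k,\ell)=(2,1)$ the other two survive and give $4s(s-1)^2-s^3=s(3s^2-8s+4)$. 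These are precisely the last two displayed equations.

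I expect the main obstacle to be organizational rather than conceptual: tracking which quadratic terms survive once the antisymmetry $T^k_{ij}=-T^k_{ji}$ and the vanishing of all components with upper index $2$ are imposed, and in particular noticing that the $e_2$-covariant derivatives cannot be extracted from the ``obvious'' substitution $(i,j,k)=(1,2,2)$ but must come from its antisymmetric partner. The two scalar polynomial identities $-4s(s-1)^2+s(5s^2-10s+4)=s^2(s-2)$ and $4s(s-1)^2-s^3=s(3s^2-8s+4)$ are routine, but I would verify them explicitly since these coefficients feed directly into the dimension-two argument that follows.
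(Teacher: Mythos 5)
Your proposal is correct and follows essentially the same route as the paper, which simply substitutes $i=1$, $j=2$ (together with the index choices forced by antisymmetry) into the first and last identities of Lemma 3.1 and reads off the result. Your computations, including the surviving terms and the two coefficient identities $-4s(s-1)^2+s(5s^2-10s+4)=s^2(s-2)$ and $4s(s-1)^2-s^3=s(3s^2-8s+4)$, all check out.
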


Note that when $g$ is not K\"ahler,  $\lambda $ is not identically zero, by the third identity above we know that  $s\neq \frac{1}{2}$ and $s\neq 1$, so all the covariant derivatives of the Chern torsion can be expressed in terms of $\lambda $ under this frame $E$. We will see in the next section that this cannot happen when $s\neq 2$ and when the Hermitian surface is a Lie group equipped with left invariant metric and left invariant complex structure, hence proving Theorem 1.2.

\vs

\section{Lie groups with flat $s\,$-Gauduchon connection}

\vsv

Now let us specialize to Hermitian manifolds $(G,g)$ which are Lie groups of real dimension $2n$ equipped with a left invariant metric $g$ and a compatible left invariant complex structure $J$. We will assume that $(G,g)$ is $\nabla^s$-flat.  Lifting to the universal cover if necessary, we may assume that $G$ is simply-connected. Let $\{ Z_1, \ldots , Z_n\}$ be a global unitary frame on $G$ that is $\nabla^s$-parallel. Since the connection $\nabla^s$ is left invariant, we know that for any $a\in G$, the pushed forward frame $dL_aZ$ by the left translation is also parallel, thus is a constant unitary change of $Z$. That is, for any $a\in G$ we have $P(a)\in U(n)$ such that
$$  dL_aZ=P(a)^{-1}Z.$$
Here we wrote $Z$ for the column vector $ ^t\!(Z_1, \ldots , Z_n)$. The map $P: G \rightarrow U(n)$ is a homomorphism, since for any $a,b\in G$, we have $L_{ab}=L_aL_b$, so
$$ P(ab)^{-1}Z = dL_{ab}Z = dL_a \,dL_b Z = dL_a((P(b)^{-1}Z) = P(b)^{-1} P(a)^{-1}Z,$$
hence $P(ab)=P(a)P(b)$. Now let $e= \,^t\!(e_1, \ldots , e_n)$ be the left invariant frame on $G$ which coincides with $Z$ at the identity element $o$ of $G$. Then we get $e(a)=P(a)^{-1}Z(a)$.  So the matrix of connection $\nabla^s$ under $e$ is given by
$\theta^s= dP^{-1}P = - P^{-1}dP$, namely
\begin{equation}
(\theta^s)_i^j = \sum_{k=1}^n \{  \Gamma^j_{ik}\varphi_k + \Gamma^j_{i\overline{k}} \overline{\varphi}_k \} ,  \label{P formula}
\end{equation}
where $\varphi$ is the coframe on $G$ dual to $e$, is given by
\begin{eqnarray}
\Gamma^j_{ik} & = &  \langle \nabla^s_{e_k}e_i , \overline{e}_j \rangle  \ = \ - \sum_{r=1}^n \overline{P_{ri}} e_k(P_{rj}) \\
\Gamma^j_{i\overline{k}} & = &  \langle \nabla^s_{\overline{e}_k}e_i , \overline{e}_j \rangle  \ = \ - \overline{ \Gamma^i_{jk} }
\end{eqnarray}
Now let us denote by $p: {\mathfrak g} \rightarrow {\mathfrak u}(n)$ the homomorphism on the Lie algebras induced by $P$, and extend it linearly over ${\mathbb C}$. We have
\begin{equation}
p([e_i,e_k]) = [ p(e_i), \,p(e_k)], \ \ \ \ p([e_i,\overline{e}_k]) = [ p(e_i), \,p(\overline{e}_k)] \label{little p}
\end{equation}
for any $i,k$.  Note that for any $X\in {\mathfrak g}^{\mathbb C}$, $p(X) = dP_{o}(X) = - \theta^s(X)$. So by our notations  $[e_i, e_k] = \sum_r C^r_{ik}e_r$, and
$ [\overline{e}_j,e_i] = \sum_r \big( D^j_{ri}\overline{e}_r - \overline{D^i_{rj}} e_r \big)$, where $C^j_{ik}$ and
$$ D^j_{ik}=\langle [\overline{e}_j,e_k], e_i\rangle = \Gamma^j_{ik} - sT^j_{ik}, $$
are the structure constants, we get the following

\begin{lemma} \label{algebriac relation}
Let $G$ be a Lie group equipped with a left invariant metric $g$ and a compatible left invariant complex structure $J$, and assume that the Hermitian manifold $(G,g)$ is $\nabla^s$-flat. Then under any left invariant unitary frame $e$, the components of the connection $\nabla^s$ satisfy
\begin{eqnarray}
&& \sum_{r=1}^n \big( C^r_{ik} \Gamma^{\ell}_{jr} + \Gamma^r_{ji} \Gamma^{\ell}_{rk} -  \Gamma^r_{jk} \Gamma^{\ell}_{ri} \big) \ = \ 0\\
&& \sum_{r=1}^n \big(  D^j_{ri} \overline{\Gamma^k_{\ell r} } + \Gamma^{\ell}_{kr} \overline{D^i_{rj} } + \Gamma^{\ell}_{ri} \overline{\Gamma^k_{rj} } -
 \Gamma^r_{ki} \overline{\Gamma^r_{\ell j} } \big) \ = \ 0
\end{eqnarray}
for any indices $i$, $j$, $k$ and $\ell$.
\end{lemma}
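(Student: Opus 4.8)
The plan is to unwind the Lie algebra homomorphism property (\ref{little p}) of $p=dP_o$ into individual matrix entries, reading off the two asserted identities from the two kinds of brackets, $[e_i,e_k]$ and $[\overline{e}_j,e_i]$. All the conceptual work (that $P\colon G\to U(n)$ is a homomorphism, hence $p$ is a Lie algebra homomorphism, and that $p(X)=-\theta^s(X)$) has already been done above, so the proof is a controlled bookkeeping exercise: first express the matrices $p(e_k)$ and $p(\overline{e}_k)$ through the connection components $\Gamma$, then compare entries on the two sides of (\ref{little p}).

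First I would record the entries of $p$ on the frame. Since $p(X)=-\theta^s(X)$ and, by (\ref{P formula}), the connection form evaluated on $e_k$ has $(i,j)$ entry (row $i$, column $j$) equal to $\Gamma^j_{ik}$, the matrix $p(e_k)$ has $(i,j)$ entry $-\Gamma^j_{ik}$. For the conjugate directions I would evaluate $-\theta^s$ on $\overline{e}_k$ and invoke the unitarity relation $\Gamma^j_{i\overline{k}}=-\overline{\Gamma^i_{jk}}$ recorded just after (\ref{P formula}); this gives that $p(\overline{e}_k)$ has $(i,j)$ entry $-\Gamma^j_{i\overline{k}}=\overline{\Gamma^i_{jk}}$. (Equivalently, $p$ intertwines the conjugation fixing $\mathfrak{g}$ with the one fixing $\mathfrak{u}(n)$, the latter being $M\mapsto -M^{*}$, so that $p(\overline{e}_k)=-p(e_k)^{*}$; I would use whichever description keeps the indices cleanest.) Fixing and then consistently keeping this row/column convention is what makes the whole computation go through.

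For the first identity I would expand both sides of $p([e_i,e_k])=[p(e_i),p(e_k)]$. By integrability $[\mathfrak{g}^{1,0},\mathfrak{g}^{1,0}]\subseteq\mathfrak{g}^{1,0}$ one has $[e_i,e_k]=\sum_r C^r_{ik}e_r$, so the left-hand side is $\sum_r C^r_{ik}\,p(e_r)$, whose $(j,\ell)$ entry is $-\sum_r C^r_{ik}\Gamma^{\ell}_{jr}$. The $(j,\ell)$ entry of the matrix commutator $[p(e_i),p(e_k)]$, computed from the entries above, is $\sum_r\big(\Gamma^r_{ji}\Gamma^{\ell}_{rk}-\Gamma^r_{jk}\Gamma^{\ell}_{ri}\big)$. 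Equating the two and moving everything to one side yields exactly the first identity. For the second identity I would apply $p$ to $[\overline{e}_j,e_i]=\sum_r\big(D^j_{ri}\overline{e}_r-\overline{D^i_{rj}}\,e_r\big)$ and set the result equal to $[p(\overline{e}_j),p(e_i)]$, which is the form of (\ref{little p}) obtained by antisymmetry of both bracket and commutator. Reading off the $(k,\ell)$ entry, the left-hand side produces the two $D$-terms $\sum_r\big(D^j_{ri}\overline{\Gamma^k_{\ell r}}+\Gamma^{\ell}_{kr}\overline{D^i_{rj}}\big)$, while the commutator on the right produces the two remaining quadratic $\Gamma$-terms; collecting these gives precisely the second identity.

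The calculations are routine once the conventions are in place; the only genuinely delicate point — and essentially the sole place where an error is likely — is the bookkeeping of the conjugation, i.e. the correct entries of $p(\overline{e}_k)$. Taking the naive entrywise conjugate $\overline{p(e_k)}$ instead of $-p(e_k)^{*}$ (equivalently, forgetting the relation $\Gamma^j_{i\overline{k}}=-\overline{\Gamma^i_{jk}}$) would produce a spurious version of the second identity with the wrong index positions and signs. I would therefore handle that substitution with particular care, and likewise make sure the free index of the matrix entry is assigned consistently ($(j,\ell)$ for the first identity, $(k,\ell)$ for the second) so that the quadratic terms land in the stated form.
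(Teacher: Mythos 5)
Your proposal is correct and is essentially the paper's own argument: the paper obtains the lemma precisely by expanding the Lie algebra homomorphism identity (\ref{little p}) entrywise, using $p(X)=-\theta^s(X)$, the unitarity relation $\Gamma^j_{i\overline{k}}=-\overline{\Gamma^i_{jk}}$, and the expansions $[e_i,e_k]=\sum_r C^r_{ik}e_r$ and $[\overline{e}_j,e_i]=\sum_r\big(D^j_{ri}\overline{e}_r-\overline{D^i_{rj}}\,e_r\big)$. Your entry-by-entry computation (with the $(j,\ell)$ and $(k,\ell)$ conventions and the conjugation handled via $p(\overline{e}_k)=-p(e_k)^{*}$) reproduces both identities exactly, so there is nothing to add.
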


Note that both $C$ and $D$ can be expressed in terms of $T$ and $\Gamma$, and the covariant derivatives of $T$ can also be expressed in terms of $T$ and $\Gamma$. While we do believe  that the identities in Lemmas 2.1, 3.1, and 4.1 are so over determined that it will force $T$ and $\Gamma$ to vanish hence give an affirmative answer to Question 1.1, at this point we do not know how to decipher those algebraic relations between $T$ and $\Gamma$. So in this article we will only try a couple of special cases: when $n=2$ or when $\Gamma =0$.

\begin{proof}[Proof of Theorem \ref{intro1}]

Let $(G,g)$ be a Lie group of real dimension $4$ equipped with a left invariant metric and a compatible complex structure. Suppose that it has a flat $s\,$-Gauduchon connection $\nabla^s$ with $s\neq 0,2$. Assuming that $g$ is not K\"ahler, we want to derive a contradiction. Let $e$ be a left invariant unitary frame on $G$. Replacing it by a constant unitary change if necessary, we may assume that $T^2_{12}=0$ and $T^1_{12}=\lambda$. Note that the constant $\lambda \neq 0$ as otherwise $g$ would be K\"ahler. Under $e$, we have
\begin{eqnarray*}
&& T^1_{12,\ell} = -\lambda \Gamma^2_{2\ell}, \ \ \ T^2_{12,\ell} = \lambda \Gamma^2_{1\ell} \\
&& T^1_{12,\overline{\ell}} = \lambda \overline{\Gamma^2_{2\ell}}, \ \ \ T^2_{12,\overline{\ell}} = - \lambda \overline{\Gamma^1_{2\ell}}
\end{eqnarray*}
By the third identity in Lemma 3.2, we know that $s\neq 1, \frac{1}{2}$. Combine the above with Lemma 3.2, we get $\Gamma^1_{22}=\Gamma^2_{11} = \Gamma^2_{12} = \Gamma^2_{21} = 0$ and
$$\Gamma^2_{22}=(s-2)\lambda = \frac{s^2(s-2)}{4(s-1)(2s-1)}\lambda , \ \ \ \ \ \Gamma^1_{21}=- \frac{s(3s^2-8s+4))}{4(s-1)(2s-1)}\lambda  $$
Since $s\neq 2$, the first equality above gives $s^2=4(s-1)(2s-1)$, or equivalently, $7s^2-12s+4=0$. From that we solve for $s$ and get $s=\frac{2}{7}(3\pm \sqrt{2})$. To rule out these two special values, let us recall the relations
$$ D^j_{ik} = \Gamma^j_{ik} - sT^j_{ik}, \ \ \ \ C^j_{ik} = -D^j_{ik} + D^j_{ki} -2T^j_{ik}.$$
We get $ D^1_{22}=D^2_{11}=D^2_{12}=D^2_{21}=C^2_{12}=0$, and
$$ D^2_{22}=(s-2)\lambda , \ \  \ D^1_{21}=\Gamma^1_{21}+s\lambda = (5s-4)\lambda, $$
and $C^1_{12}+D^1_{12}=D^1_{21}-2\lambda = (5s-6)\lambda$. By considering the coefficient in front of $e_2$ for the Jacobi identity involving $\{ e_1, e_2, \overline{e}_1\}$, or equivalently, by letting $i=\ell=1$ and $j=k =2$ in (\ref{eq:thirdJacobi}), we get
$$ (C^1_{12}+D^1_{12})\overline{D^1_{21}} = D^1_{21} \overline{D^2_{22}} $$
That is, $(5s-6)\lambda (5s-4)\overline{\lambda} = (5s-4)\lambda (s-2)\overline{\lambda}$. Since $\lambda \neq 0$ and $5s-4\neq 0$, this implies that $5s-6=s-2$, or $s=1$, contradicting to the condition $s=\frac{2}{7}(3\pm \sqrt{2})$ that we obtained earlier. So when $n=2$ and $s\neq 0, 2$, any $(G,g)$ that is $\nabla^s$-flat must be K\"ahler. This completes the proof of Theorem 1.2.
\end{proof}

\begin{proof}[Proof of Theorem \ref{intro2}]

Let $G$ be a Lie group of real dimension $2n$ equipped with a left invariant metric $g$ and a compatible left invariant complex structure $J$, and suppose that $(G,g)$ is $\nabla^s$-flat with $s\neq 0,2$. Under the assumption of Theorem 1.3, the Lie group admits left invariant parallel frame $e$. This means that $\Gamma^j_{ik}=0$ for any $1\leq i,j,k\leq n$. Thus we have
$$ D=-sT, \ \ \ C=2(s-1)T.$$
The vanishing of $\Gamma$ also implies that, under the frame $e$, the components $T^j_{ik,\, \ell}=T^j_{ik, \, \overline{\ell}} =0$ for all indices. So Lemma 3.1 will take a particularly simple form. By letting $i=k$ and $j=\ell$ in the last equality of Lemma 3.1, we get
\begin{equation}
\sum_{r=1}^n \{   4(s-1)^2 | T^r_{ij}|^2 + (5s^2-10s+4)  ( T^i_{ir} \overline{T^j_{jr}} - |T^i_{jr}|^2 )  - s^2 ( |T^j_{ir}|^2 -  T^j_{jr} \overline{T^i_{ir}} ) \} = 0. \label{eq:2n}
\end{equation}
Since we already proved Theorem 1.2, we may assume that $n\geq 3$. Now let us separate our discussion into two cases: $s\neq 1$ and $s=1$. First assume that $s\neq 1$. By the third equality in Lemma 3.1, we know that $\sum_r T^{\ell}_{ir} T^r_{jk} = 0$. Since $C=2(s-1)T$, this means that
$$[X,[Y,Z]]=0  \ \ \ \ \forall  \ X,Y,Z\in {\mathfrak g}^{1,0}, $$
that is,  ${\mathfrak g}^{1,0}$ is $2$-step nilpotent.  Let $V_0$ be the center of ${\mathfrak g}^{1,0}$ and we choose $e$ so that $\{ e_1, \ldots , e_p\}$ spans $V_0$. Then we know that
$$ T^j_{\ast \ast }= T^{\ast }_{i\ast } = 0$$
for any $1\leq i\leq p$ and any $p<j\leq n$. Here $\ast$ can be any integer between $1$ and $n$. By letting $1\leq j\leq p$ in (\ref{eq:2n}), we get $\sum_r |T^j_{ir}|^2=0$, that is, $T^j_{\ast \ast }=0$ for any $1\leq j\leq p$ also. So $T=0$ and $g$ is K\"ahler.

\vsv

Now we assume that $s=1$. In this case we have $C=0$ and $D=-T$. The first equality of Lemma 3.1 now says that
\begin{equation}
 \sum_{r=1}^n   \{  T^{\ell}_{jr}  T^r_{ik} -   T^{\ell}_{kr} T^r_{ij}  \} =0 , \label{eq:A}
\end{equation}
which is just the Jacobi identity for $\{ e_j, e_k, \overline{e}_{\ell} \}$ as $C=0$ in this case. Let $s=1$ in (\ref{eq:2n}) and taking the real part, we get
\begin{equation}
 \sum_{r=1}^n \{ |T^j_{ri}|^2 - |T^i_{rj}|^2\} = 0  \label{eq:B}
 \end{equation}
for any $1\leq i, j\leq n$. We want to conclude that $T$ must vanish in this case. Following \cite{YZ}, for any $X=\sum X_ie_i$ in $V={\mathfrak g}^{1,0}$, let us denote by $A_X$ the linear transformation on the vector space $V$ given by $A_X(e_j) = \sum_{i,k} X_iT^k_{ij}e_k$. Multiplying on equation (\ref{eq:A}) by $X_iY_k$ and add up, and then by $X_kY_i$ and add up, we see that $A_XA_Y = - A_Y A_X$ for any $X$, $Y$ in $V$. By the {\em Claim} in the proof of Theorem 2 of \cite{YZ}, we know that there exists non-zero vector $W\in V$ such that $A_Y(W)=0$ for any $Y\in V$. Without loss of generality, we may assume that $W=e_n$. So we have $T^i_{nk}=0$ for any $i$, $k$. By letting $j=n$ in (\ref{eq:B}), we see that $T^n_{ik}=0$ for any $i$, $k$. Restricting $T$ to the subspace $V_1$ spanned by $\{ e_1, \ldots , e_{n-1}\}$ and repeating the argument, we eventually get $T=0$ for all indices. That is, the metric $g$ is K\"ahler in the $s=1$ case as well. This concludes the proof of Theorem 1.3.
\end{proof}

\appendix

\section{Lie groups with a flat left-invariant K\"ahler structure}

In this section we recall some results on flat left-invariant Lie groups due to Milnor \cite{Milnor}
and Barberis-Dotti-Fino \cite{BDF}. These results lead to Corollary \ref{flat Kahler intro} stated in
Section 1.

\begin{theorem}[Theorem 1.5 on p.298 in \cite{Milnor}]
A connected Lie group $G$ with a left invariant Riemannian metric is flat if and only if
its Lie algebra $\mathfrak{g}$ admits an orthogonal decomposition
\begin{equation}
\mathfrak{g}=\mathfrak{h} \oplus \mathfrak {i},
\end{equation} where $\mathfrak{h}$ is an abelian Lie subalgebra and $\mathfrak{i}$ is an abelian
ideal. Moreover, $\operatorname{ad}(X)$ is skew-adjoint for any $X \in \mathfrak{h}$.
\end{theorem}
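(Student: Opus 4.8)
The plan is to reformulate flatness in terms of the map $\rho\colon\mathfrak g\to\mathfrak{gl}(\mathfrak g)$, $\rho(X)=\nabla_X$, where $\nabla$ is the Levi-Civita connection restricted to left invariant fields. Because $\nabla$ is metric, each $\rho(X)$ is skew-adjoint with respect to $\langle\,,\,\rangle$, and because $\nabla$ is torsion-free the Koszul formula (cf. (3)) gives $\rho(X)Y-\rho(Y)X=[X,Y]$. Since $R(X,Y)=[\rho(X),\rho(Y)]-\rho([X,Y])$, the metric is flat precisely when $\rho$ is a Lie algebra homomorphism $\mathfrak g\to\mathfrak{so}(\mathfrak g)$. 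I would dispose of the ``if'' direction by direct verification: for the stated splitting one checks that $\nabla_X=\operatorname{ad}(X)$ for $X\in\mathfrak h$ and $\nabla_V=0$ for $V\in\mathfrak i$ defines the Levi-Civita connection (using that $\operatorname{ad}(X)$ is skew, that $\mathfrak h$ is abelian, and that $\mathfrak i$ is an abelian ideal), and that the resulting $\rho$ is a homomorphism, hence $R=0$.

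For the ``only if'' direction I would set $\mathfrak i=\ker\rho$. As the kernel of a homomorphism it is an ideal, and it is abelian since $[V,W]=\rho(V)W-\rho(W)V=0$ for $V,W\in\mathfrak i$. A short computation from Koszul yields, for all $X,Y,Z$, the ``defect'' identity $\langle[X,Y],Z\rangle+\langle Y,[X,Z]\rangle=\langle X,\,\rho(Y)Z+\rho(Z)Y\rangle$, so that $\operatorname{ad}(X)$ is skew-adjoint if and only if $X$ is orthogonal to the image of the symmetric map $U(Y,Z)=\tfrac12(\rho(Y)Z+\rho(Z)Y)$. I would then take $\mathfrak h=\mathfrak i^{\perp}$, giving the orthogonal splitting $\mathfrak g=\mathfrak h\oplus\mathfrak i$, and it remains to prove that $\operatorname{ad}(X)$ is skew for $X\in\mathfrak h$ and that $\mathfrak h$ is abelian.

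The crux, and the step I expect to be the main obstacle, is to show that flatness forces the holonomy image $\rho(\mathfrak g)\subseteq\mathfrak{so}(\mathfrak g)$ to be abelian (equivalently $[\mathfrak g,\mathfrak g]\subseteq\mathfrak i$) and that $U$ takes its values in $\mathfrak i$. The homomorphism property alone is insufficient, since a subalgebra of $\mathfrak{so}(\mathfrak g)$ need not be abelian, so one must genuinely exploit the vanishing of the full curvature tensor rather than just $R$ viewed as the obstruction to $\rho$ being a homomorphism. Here I would invoke Milnor's sectional/Ricci curvature identities for left invariant metrics: the trace form $\kappa(Y,Z)=\operatorname{tr}(\rho(Y)\rho(Z))$ is negative semi-definite, $\rho(\mathfrak g)$ is a compact subalgebra, and vanishing Ricci curvature annihilates its semisimple part, leaving it abelian; the same computation delivers $\operatorname{Im}U\subseteq\ker\rho$. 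This is exactly the point where the homogeneous, left invariant nature of the metric is decisive.

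Granting these two facts, the conclusion is formal. Since $\operatorname{Im}U\subseteq\mathfrak i=\mathfrak h^{\perp}$, the defect identity shows $\operatorname{ad}(X)$ is skew-adjoint for every $X\in\mathfrak h$. Finally $\mathfrak h$ is abelian: for $X,Y\in\mathfrak h$ one has $[X,Y]\in[\mathfrak g,\mathfrak g]\subseteq\mathfrak i$, and using that $\operatorname{ad}(X)$ is skew, that $\mathfrak i$ is an ideal, and that $Y\perp\mathfrak i$, we obtain $|[X,Y]|^{2}=-\langle Y,[X,[X,Y]]\rangle=0$, whence $[X,Y]=0$. This produces the orthogonal decomposition $\mathfrak g=\mathfrak h\oplus\mathfrak i$ with $\mathfrak h$ an abelian subalgebra on which $\operatorname{ad}$ is skew-adjoint and $\mathfrak i$ an abelian ideal, completing the argument.
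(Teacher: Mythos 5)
The paper itself contains no proof of this statement: it is quoted, with attribution, as Theorem 1.5 of \cite{Milnor} and used as a black box in the appendix, so your proposal can only be judged on its own merits, not against an argument in the paper.

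Your skeleton is sound, and the routine parts check out: flatness is indeed equivalent to $\rho(X)=\nabla_X$ being a Lie algebra homomorphism $\mathfrak g\to\mathfrak{so}(\mathfrak g)$; the ``if'' direction by exhibiting $\nabla_X=\operatorname{ad}(X)$ on $\mathfrak h$, $\nabla_V=0$ on $\mathfrak i$ is correct; $\ker\rho$ is an abelian ideal; your defect identity follows from the Koszul formula; and, \emph{granting} the two facts $[\mathfrak g,\mathfrak g]\subseteq\ker\rho$ and $\operatorname{Im}U\subseteq\ker\rho$, your concluding deductions (skewness of $\operatorname{ad}(X)$ for $X\in\mathfrak h=\mathfrak i^{\perp}$, and $|[X,Y]|^2=-\langle Y,[X,[X,Y]]\rangle=0$) are valid. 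The genuine gap is exactly at the step you yourself flag as the crux, which you do not prove but ``invoke.'' The asserted mechanism is not a quotable identity: negative semi-definiteness of $\operatorname{tr}(\rho(Y)\rho(Z))$ and compactness of the subalgebra $\rho(\mathfrak g)\subseteq\mathfrak{so}(\mathfrak g)$ hold automatically for \emph{any} metric connection and by themselves imply nothing, while ``vanishing Ricci curvature annihilates the semisimple part'' is not one of Milnor's curvature identities --- it is essentially the content of the theorem being proved. Note also that Ricci-flatness carries no information beyond flatness here (it is a trivial consequence of $R=0$), and the fact that homogeneous Ricci-flat metrics are flat is itself the deep Alekseevskii--Kimelfeld theorem; this is a warning that no soft trace-form argument will kill the semisimple part of $\rho(\mathfrak g)$.

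The gap is fillable, but by a different mechanism, and it genuinely uses global geometry (or a finer structure-theoretic argument). One way: if $\mathfrak s=[\rho(\mathfrak g),\rho(\mathfrak g)]\neq 0$, pick a Levi subalgebra $\mathfrak l\subseteq\mathfrak g$; since $\rho(\mathfrak l)=\rho([\mathfrak l,\mathfrak l])\subseteq\mathfrak s$ and $\rho(\mathfrak l)$ surjects onto $\mathfrak s$, some simple factor $\mathfrak l_1$ of $\mathfrak l$ is isomorphic to a simple Lie algebra of compact type. By Weyl's theorem the analytic subgroup of the universal cover $\widetilde G$ with Lie algebra $\mathfrak l_1$ is a nontrivial compact group, acting freely by isometries (left translations) on $\widetilde G$, which by completeness, flatness and simple connectivity is isometric to Euclidean space; Cartan's fixed point theorem then gives a contradiction. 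Hence $\rho(\mathfrak g)$ is abelian, i.e. $[\mathfrak g,\mathfrak g]\subseteq\ker\rho=:\mathfrak i$. The second fact then follows by elementary algebra rather than by ``the same computation'': each $\rho(x)$ preserves $\mathfrak i$ (because $\rho(x)u=[x,u]+\rho(u)x=[x,u]\in\mathfrak i$) and hence preserves $\mathfrak h=\mathfrak i^{\perp}$; for $x,y\in\mathfrak h$ the vector $\rho(x)y-\rho(y)x=[x,y]$ lies in $\mathfrak h\cap\mathfrak i=0$, so the trilinear form $\langle\rho(x)y,z\rangle$ on $\mathfrak h$ is symmetric in $(x,y)$ and skew in $(y,z)$, hence vanishes identically; therefore $\rho(x)|_{\mathfrak h}=0$ for every $x\in\mathfrak g$, so $\operatorname{Im}\rho(x)\subseteq\mathfrak i$, which contains $\operatorname{Im}U$. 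With these two points supplied, your argument closes; without them, the proposal assumes what is to be proved.
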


In the above theorem, the abelian ideal $\mathfrak i$  is $\mathfrak{i}=\{Y \in \mathfrak{g}\ |\ \nabla_{Y}=0 \}$, where
$\nabla$ is the Levi-Civita connection. It is further observed (Proposition 2.1 in \cite{BDF}) that
$\mathfrak{i}=\mathfrak{c} \oplus [\mathfrak{g}, \mathfrak{g}]$ where $\mathfrak{c}$ is the center of
$\mathfrak{g}$ and $[\mathfrak{g}, \mathfrak{g}]$ is of even dimension.

Lie groups with a flat left-invariant K\"ahler structure were classified in Proposition 2.1 and Corollary 2.2 in \cite{BDF}. We summarize and slightly rephrase their results as follows.

\begin{theorem}[Proposition 2.1, Corollary 2.2, and Proposition 3.1 in \cite{BDF}] \label{flat k}
Let $(G, J, g)$ be connected Lie group with a left invariant complex structure and a flat left invariant
K\"ahler metric. Then the Lie algebra $\mathfrak{g}$ of $G$  admits an orthogonal decomposition
\begin{equation}
\mathfrak{g}=\mathfrak{h} \oplus \mathfrak {c} \oplus [\mathfrak{g}, \mathfrak{g}],
\end{equation}
which satisfies the following properties:
\begin{enumerate}
\item $\mathfrak{h}$ is a Lie subalgebra, $\mathfrak {c}$ is the center of $\mathfrak {g}$, and $[\mathfrak{g}, \mathfrak{g}]$ is $J$-invariant.

\item  The adjoint action $\operatorname{ad}$ injects $\mathfrak{h}$ to skew-symmetric transformations on $[\mathfrak{g}, \mathfrak{g}]$ which commutes with $J$. Furthermore, we have the `simultaneous diagonalization' of the restriction of $\operatorname{ad}_{X}=\nabla_X$ on $\mathfrak{h}$ in the following sense:

     There exists an orthonormal basis with $[\mathfrak{g}, \mathfrak{g}]=\operatorname{span} \{e_1, Je_1, \cdots, e_p, Je_p\}$ and an injective linear map $q=(q_1, \cdots, q_p): \mathfrak{h} \rightarrow \mathbb{R}^p$ such that for any $X \in \mathfrak{h}$ and $1 \leq i \leq p$, $\operatorname{ad}_{X}$ has the block diagonal form:
\begin{equation}
ad_X \left[ \begin{array} {cc} e_i  \\ Je_i  \end{array} \right]=
\left[ \begin{array} {cc} 0 & q_i(X)  \\ -q_i (X) & 0 \end{array} \right]
\left[ \begin{array} {cc} e_i  \\ Je_i  \end{array} \right].\nonumber
\end{equation}

\item There are further orthogonal decompositions $\mathfrak{h}=(\mathfrak{h} \cap J\mathfrak{h}) \oplus \mathfrak{h}_1$ and $\mathfrak{c}=(\mathfrak{c} \cap J\mathfrak{c}) \oplus \mathfrak{c}_1$, and
   $J$ restricts to an isomorphism between $\mathfrak{h}_1$ and $\mathfrak{c}_1$.
\end{enumerate}
\end{theorem}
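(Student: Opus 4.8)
The plan is to start from Milnor's structure theorem (the first theorem of this appendix) and feed the K\"ahler condition into it. By that theorem the flatness of $g$ gives an orthogonal splitting $\mathfrak g = \mathfrak h\oplus\mathfrak i$ with $\mathfrak h$ an abelian subalgebra on which every $\operatorname{ad}_X$ is skew-adjoint, $\mathfrak i=\{Y:\nabla_Y=0\}$ an abelian ideal, and, as recalled after that theorem, $\mathfrak i=\mathfrak c\oplus[\mathfrak g,\mathfrak g]$; Milnor's analysis moreover identifies the Levi-Civita connection as $\nabla_X=\operatorname{ad}_X$ for $X\in\mathfrak h$ and $\nabla_Y=0$ for $Y\in\mathfrak i$. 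First I would translate the K\"ahler hypothesis $\nabla J=0$, i.e. $\nabla_U\circ J=J\circ\nabla_U$ for all $U$, through this description: for $U\in\mathfrak i$ it is vacuous since $\nabla_U=0$, while for $U=X\in\mathfrak h$ it becomes $\operatorname{ad}_X\circ J=J\circ\operatorname{ad}_X$. Thus each $\operatorname{ad}_X$ $(X\in\mathfrak h)$ is complex linear, and since $\mathfrak h$ is abelian the family $\{\operatorname{ad}_X\}_{X\in\mathfrak h}$ consists of commuting, skew-symmetric, $J$-commuting operators. This single observation drives the whole proof.

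Next I would derive (1) and the first half of (2). Because $\mathfrak h$ and $\mathfrak i$ are abelian one has $[\mathfrak g,\mathfrak g]=[\mathfrak h,\mathfrak i]=\sum_{X\in\mathfrak h}\operatorname{ad}_X(\mathfrak i)$, and since $\operatorname{ad}_X$ commutes with $J$ this image is $J$-invariant; hence $[\mathfrak g,\mathfrak g]$ is $J$-invariant. As $\mathfrak c$ is central and $\mathfrak h$ abelian, each $\operatorname{ad}_X$ annihilates $\mathfrak h\oplus\mathfrak c$ and preserves the orthogonal summand $[\mathfrak g,\mathfrak g]$, so it restricts there to a skew-symmetric complex-linear map; the map $X\mapsto\operatorname{ad}_X|_{[\mathfrak g,\mathfrak g]}$ is injective because its kernel would lie in $\mathfrak h\cap(\text{center})=\mathfrak h\cap\mathfrak c=0$. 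This yields (1) and the statement that $\operatorname{ad}$ injects $\mathfrak h$ into the skew-symmetric, $J$-commuting transformations of $[\mathfrak g,\mathfrak g]$. For the second half of (2) I would invoke the simultaneous normal form for a commuting family of skew-symmetric operators that are complex linear for $J$: viewing $[\mathfrak g,\mathfrak g]$ as a Hermitian space, these operators are commuting and skew-Hermitian, hence simultaneously unitarily diagonalizable, and over $\mathbb{R}$ this produces an orthonormal basis $\{e_1,Je_1,\dots,e_p,Je_p\}$ in which every $\operatorname{ad}_X$ is block diagonal with blocks $\begin{pmatrix}0 & q_i(X)\\ -q_i(X) & 0\end{pmatrix}$; linearity of $X\mapsto\operatorname{ad}_X$ makes each $q_i:\mathfrak h\to\mathbb{R}$ linear, and injectivity of $q=(q_1,\dots,q_p)$ is just the injectivity already established.

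Finally, for (3) I would work inside $\mathfrak m:=\mathfrak h\oplus\mathfrak c$, which is $J$-invariant since it is the orthogonal complement of the $J$-invariant subspace $[\mathfrak g,\mathfrak g]$. Splitting off the maximal $J$-invariant subspaces $\mathfrak h\cap J\mathfrak h\subseteq\mathfrak h$ and $\mathfrak c\cap J\mathfrak c\subseteq\mathfrak c$ and letting $\mathfrak h_1,\mathfrak c_1$ be their orthogonal complements, one checks that $\mathfrak h_1$ and $\mathfrak c_1$ contain no nonzero $J$-invariant subspace, that $\mathfrak h_1\oplus\mathfrak c_1$ is $J$-invariant, and that $J\mathfrak h_1\cap\mathfrak h_1=0$, which already forces $\dim\mathfrak h_1=\dim\mathfrak c_1$.

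The main obstacle is precisely the last clause of (3): showing that $J$ does not merely embed $\mathfrak h_1$ into $\mathfrak h_1\oplus\mathfrak c_1$ but actually carries it onto $\mathfrak c_1$. Within $\mathfrak m$ all the Lie-theoretic and integrability constraints degenerate, since brackets among the abelian $\mathfrak h$ and the central $\mathfrak c$ vanish, so this pairing cannot be read off from those conditions alone; the argument has to exploit the asymmetry between $\mathfrak h$ and $\mathfrak c$ encoded by the $\operatorname{ad}$-action on $[\mathfrak g,\mathfrak g]$, the center being exactly the part of $\mathfrak i$ killed by $\mathfrak h$. I would carry this out by analyzing the orthogonal projection $P_{\mathfrak c}\,J|_{\mathfrak h_1}$ together with the maximality defining $\mathfrak h\cap J\mathfrak h$ and $\mathfrak c\cap J\mathfrak c$, and I expect this bookkeeping, rather than anything in steps (1)--(2), to be the delicate heart of the proof.
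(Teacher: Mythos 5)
First, a remark on the comparison itself: the paper gives no proof of this theorem --- it is quoted (and, as the authors say, ``slightly rephrased'') from \cite{BDF} --- so there is no internal argument to measure yours against. Your route, feeding $\nabla J=0$ into Milnor's theorem via $\nabla_X=\operatorname{ad}_X$ on $\mathfrak{h}$ and $\nabla_Y=0$ on $\mathfrak{i}$, is the natural one, and your treatment of (1) and (2) is correct and complete: the $J$-invariance of $[\mathfrak{g},\mathfrak{g}]=\sum_{X\in\mathfrak{h}}\operatorname{ad}_X(\mathfrak{g})$, the injectivity of $\operatorname{ad}$ on $\mathfrak{h}$ (its kernel lies in $\mathfrak{h}\cap\mathfrak{c}=0$), and the simultaneous diagonalization of the commuting, skew-Hermitian, $J$-linear family all go through exactly as you describe.

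The genuine gap is (3), which you explicitly leave open, and your plan for closing it is misdirected. The key fact you did not isolate is that the hypotheses impose \emph{no constraint at all} on $J|_{\mathfrak{m}}$, where $\mathfrak{m}=\mathfrak{h}\oplus\mathfrak{c}$: since $\nabla_Y=0$ for $Y\in\mathfrak{i}$ and $\operatorname{ad}_X$ annihilates $\mathfrak{m}$, the condition $[\operatorname{ad}_X,J]=0$ is vacuous on $\mathfrak{m}$, so $J|_{\mathfrak{m}}$ can be an arbitrary orthogonal complex structure there. Hence there is no ``asymmetry between $\mathfrak{h}$ and $\mathfrak{c}$ encoded by the $\operatorname{ad}$-action'' for $J$ to detect, and in fact the literal claim $J(\mathfrak{h}_1)=\mathfrak{c}_1$ is \emph{false}: take $\mathfrak{g}$ with orthonormal basis $\{x_1,x_2,c_1,c_2,e_1,f_1,e_2,f_2\}$, only nonzero brackets $[x_i,e_i]=f_i$, $[x_i,f_i]=-e_i$ ($i=1,2$), $Je_i=f_i$, and on $\mathfrak{m}$ set $Jx_1=\tfrac{1}{\sqrt{2}}(x_2+c_1)$, $J\bigl(\tfrac{1}{\sqrt{2}}(x_2+c_1)\bigr)=-x_1$, $J\bigl(\tfrac{1}{\sqrt{2}}(x_2-c_1)\bigr)=c_2$. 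One checks $\nabla J=0$, so this is a flat K\"ahler Lie algebra with $\mathfrak{h}_1=\mathfrak{h}$, $\mathfrak{c}_1=\mathfrak{c}$, yet $J\mathfrak{h}_1=\operatorname{span}\{x_2+c_1,\,-x_1+c_2\}\neq\mathfrak{c}_1$. What \emph{is} true --- and must be the intended reading of the rephrased statement --- is that the composition $P_{\mathfrak{c}_1}\circ J|_{\mathfrak{h}_1}$, exactly the map you propose to study, is an isomorphism onto $\mathfrak{c}_1$; but far from being ``delicate bookkeeping,'' this is three lines of linear algebra using no Lie theory: for $x\in\mathfrak{h}_1$ and $w\in\mathfrak{h}\cap J\mathfrak{h}$ one has $\langle Jx,w\rangle=-\langle x,Jw\rangle=0$ since $\mathfrak{h}\cap J\mathfrak{h}$ is $J$-invariant and orthogonal to $x$, and likewise $\langle Jx,w\rangle=0$ for $w\in\mathfrak{c}\cap J\mathfrak{c}$ since then $Jw\in\mathfrak{c}\perp\mathfrak{h}$; hence $Jx\in\mathfrak{h}_1\oplus\mathfrak{c}_1$, and if $P_{\mathfrak{c}_1}Jx=0$ then $Jx\in\mathfrak{h}_1$, forcing $x\in\mathfrak{h}\cap J\mathfrak{h}$ and so $x=0$. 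This injectivity, together with the same argument with $\mathfrak{h}$ and $\mathfrak{c}$ interchanged, gives $\dim\mathfrak{h}_1=\dim\mathfrak{c}_1$ and the isomorphism. In short: your (1)--(2) stand; your (3) is incomplete, and the completion you envisage (exploiting the $\operatorname{ad}$-action) cannot exist --- the true statement is purely metric-linear-algebraic, while the literal one fails already in real dimension $8$.
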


It is straightforward to write down all such Lie algebras in real dimension 4, which is exactly covered in
Corollary \ref{flat Kahler intro}.

It might be interesting to compare the kernel space $\{Y \in \mathfrak{g}\ |\ \nabla_{Y}=0 \}$ in Theorem \ref{flat k} with the Lie algebra homomorphism $p: {\mathfrak g} \rightarrow {\mathfrak u}(n)$ defined in (\ref{little p}) in Section 4. Let $\nabla=\nabla^{s}$, then our Theorem \ref{intro2} can be reinterpreted as: for a $\nabla^{s}$-flat Hermitian Lie group $(G, J, g)$ with a left invariant structure, if the kernel space $\{Y \in \mathfrak{g}\ |\ \nabla_{Y}=0 \}=\mathfrak{g}$, then $(G, J, g)$ is K\"ahler and isomorphic to a complex vector group. We will study the structure of a $\nabla^{s}$-flat left invariant Hermitian Lie group in the spirit of Theorem \ref{flat k} in a future work.

\end{document}